\theoremstyle{plain}
\newtheorem{THEOREM}{Theorem}[section]
\newtheorem{LEMMA}[THEOREM]{Lemma}
\newtheorem{QUESTION}[THEOREM]{Question}
\newtheorem{corollary}[THEOREM]{Corollary}
\newtheorem{lemma}[THEOREM]{Lemma}
\newtheorem{proposition}[THEOREM]{Proposition}
\theoremstyle{definition}
\theoremstyle{remark}
\newtheorem{REMARK}[THEOREM]{Remark}
\newtheorem{claim}[THEOREM]{Claim}
\newcommand{\thm}[1]{Theorem~\ref{#1}}
\newcommand{\lem}[1]{Lemma~\ref{#1}}
\newcommand{\prop}[1]{Proposition~\ref{#1}}
\newcommand{\N}{\ensuremath{\mathbb{N}}}   %%% naturals
\newcommand{\R}{\ensuremath{\mathbb{R}}}   %%% reals
\def \a {\alpha}
\def \b {\beta}
\def \d {\delta}
\def \e {\epsilon}
\def \l {\lambda}
\def \n {\nabla}
\def \s {\sigma}
\def \th {\theta}
\def \w {\omega}
\def \O {\Omega}
\def \cE {\mathcal{E}}
\def \cH {\mathcal{H}}
\def \cM {\mathcal{M}}
\def \< {\langle}
\def \> {\rangle}
\def \p {\partial}
\def \ss {\subset}
\DeclareMathOperator{\supp}{supp} %
\newcommand{\rest}[2]{#1\raisebox{-0.3ex}{\mbox{$\mid_{#2}$}}}
\newcommand{\dist}[2]{\mathrm{dist}(#1,#2)}
\def \Lip {\mathrm{Lip}}
\def \dt  {\, \mbox{d}t}
\def \dtau  {\, \mbox{d}\tau}
\def \dx  {\, \mbox{d}x}
\def \dy  {\, \mbox{d}y}
\def \dl  {\, \mbox{d}\l}
\def \dmu  {\, \mbox{d}\mu}
\def \dth  {\, \mbox{d}\th}
\def \dE  {\, \mbox{d}\cE}
\def \OR {\mathcal{OR}}
\newcommand{\mres}{\mathbin{\vrule height 1.6ex depth 0pt width
0.13ex\vrule height 0.13ex depth 0pt width 1.3ex}}
\begin{document}

\title{The Energy Measure for the Euler and Navier-Stokes Equations}
\author{Trevor M. Leslie and Roman Shvydkoy}

\email{tlesli2@uic.edu; shvydkoy@uic.edu}

\address{Department of Mathematics, Statistics, and Computer Science \\851 S Morgan St, M/C 249 \\ University of Illinois at Chicago, Chicago, IL, 60607}

\begin{abstract}
The potential failure of energy equality for a solution $u$ of the Euler or Navier-Stokes equations can be quantified using a so-called `energy measure': the weak-$*$ limit of the measures $|u(t)|^2\dx$ as $t$ approaches the first possible blowup time. We show that membership of $u$ in certain (weak or strong) $L^q L^p$ classes  gives a uniform lower bound on the lower local dimension of $\cE$; more precisely, it implies uniform boundedness of a certain upper $s$-density of $\cE$. We also define and give lower bounds on the `concentration dimension' associated to $\cE$, which is the Hausdorff dimension of the smallest set on which energy can concentrate.  Both the lower local dimension and the concentration dimension of $\cE$ measure the departure from energy equality.  As an application of our estimates, we prove that any solution to the $3$-dimensional Navier-Stokes Equations which is Type-I in time must satisfy the energy equality at the first blowup time. %Furthermore, we give new criteria for energy conservation (equality) in terms of the dimension of the singularity set and classical $L^q L^p$ conditions.
\end{abstract}

\maketitle

\section{Introduction}

We consider the incompressible Euler or Navier-Stokes initial value problem on $\R^n$:
\begin{align}
\label{e:moment}
\p_t u + u\cdot \n u - \nu \Delta u & = -\n p, \\
\label{e:incomp}
\n \cdot u & = 0, \\
\label{e:IC}
u(t_0) & = u_0.
\end{align}
Here we understand that $\nu = 0$ and $n\ge 3$ if we are considering the Euler Equations, and that $n=3$ if $\nu > 0$.  In either case, we assume that $u_0\in H^{\frac n2 + 1 + \e}(\R^n)$ for some $\e>0$, so that there exists a unique local-in-time solution 
\begin{equation}
\label{e:EEreg}
u\in C([t_0,t_1); H^{\frac{n}{2} + 1 + \e}(\R^n)),
\end{equation}
for some $t_1>0$, with the associated pressure given by 
\begin{equation}
\label{e:pdef}
p = R_i R_j (u_i u_j),
\end{equation}
where $R_i$, $R_j$ denote the classical Riesz transforms.
%\begin{equation}
%\label{e:pdef}
%p(x,t) = -\frac{|u(x,t)|^2}{n} + p.v.\int_{\R^n} %K_{ij}(x-y)u_i(y)u_j(y)\dy.
%\end{equation}
%Here
%\[
% K_{ij}(y) = \frac{n y_i y_j - \d_{ij}|y|^2}{n\w_n |y|^{n+2}},
%\]
%where $\w_n$ is the volume of the unit ball in $\R^n$. 
We assume that $(u,p)$ can be extended to some larger time interval $[t_0, T]$, with $T\ge t_1$, with $u$ a weak solution on the larger interval, which is weakly continuous in $L^2$ at $t=t_1$.  If $\nu>0$, we assume that $u$ is a Leray-Hopf solution on $[t_0,T]$. Let $\O\subset \R^n$ be a bounded subdomain and assume $u\in L^3(t_0,t_1; L^3(\O))$ (this is automatic if $\nu>0$).  We will work either on $\O$ or on the full space $\R^n$; in the latter case we will assume without further comment that $u\in L^3(t_0, t_1; L^3(\R^n))$.  We stress that even when we work on $\O$, the pair $(u,p)$ will solve \eqref{e:moment}--\eqref{e:IC} on the full space.  

Trivial manipulations of \eqref{e:moment} and \eqref{e:incomp} yield the following local energy equality for all nonnegative $\s\in C_0^\infty(\O\times [t_0,t_1])$ and all $t\in [t_0,t_1)$:
\begin{equation}
\label{e:localee}
\begin{split}
\int_\O |u(t)|^2 \s(t) \dx =
& \int_\O |u(t_0)|^2 \s(t_0) \dx -2\nu\int_{t_0}^t \int_\O |\n u|^2 \s\dx\dt  \\
& + \int_{t_0}^t \int_\O |u|^2(\p_t \s + \nu \Delta \s)\dx\,\dtau + \int_{t_0}^t \int_\O (|u|^2 + 2p)u \cdot \n \s\dx \dtau.
\end{split}
\end{equation}

We are concerned with the question of whether \eqref{e:localee} continues to hold when $t=t_1$.  If $u$ remains regular at $t=t_1$, then the answer is clearly affirmative; therefore we assume without loss of generality that $u$ does in fact lose regularity at time $t=t_1$.  In this case we can legitimately claim only that the local energy \emph{inequality} holds at $t=t_1$ for all non-negative test-functions $\s$. This is a simple consequence of the weak lower semicontinuity of the $L^2$ norm and the regularity in time of $\s$:
\begin{equation}
\label{e:localei}
\begin{split}
\int_\O |u(t_1)|^2 \s(t_1) \dx & \le  \lim_{t\to t_1^-} \int_\O |u(t)|^2\s(t)\dx \\
& =\int_\O |u(t_0)|^2 \s(t_0) \dx -2\nu\int_{t_0}^{t_1} \int_\O |\n u|^2 \s\dx\dt  \\
& \quad + \int_{t_0}^{t_1} \int_\O |u|^2(\p_t \s + \nu \Delta \s)\dx\dtau + \int_{t_0}^{t_1} \int_\O (|u|^2 + 2p)u \cdot \n \s\dx \dtau.
\end{split}
\end{equation}
We ask, then: in what circumstances may we conclude that \eqref{e:localee} survives the first blowup time, i.e. \eqref{e:localee} rather than just \eqref{e:localei} holds at $t=t_1$?  

\subsection{Background on the Energy Equality}
To begin with, we give one sufficient condition for the energy equality \eqref{e:localee} to hold at time $t=t_1$, which gives a partial answer to the question above, and which we will use  extensively below.  For $U$ an open subset of $\R^n$ and $I$ a relatively open interval in $[t_0, T]$, define the ``Onsager regular'' function class $\OR(\R^n \times I)$ and its local-in-space version $\OR(U\times I)$ as follows:
\[
\OR(\R^n \times I) = \{  f \in L^3(\R^n\times I):  \lim_{y \to 0 } \frac{1}{|y|}  \int_I \int_{\R^n}  |f(x+y,t) - f(x,t)|^3 \dx\dt  = 0\}.
\]
\[
\OR(U \times I) = \{  f \in L^3(U\times I): \s f \in \OR(\R^n \times I), \text{ for all } \s \in C_0^\infty(U) \}.
\]
We sometimes omit parts of the notation for these spaces when there is no risk of sacrificing clarity.

The result of \cite{RSSingFlows} states that if $u$ is a weak solution to the Euler equations on $[t_0,T]$, and if $u\in \OR(\O\times [t_0,T])$, then $u$ satisfies \eqref{e:localee} for every $t\in [t_0,T]$.  (Actually, the theorem states something slightly stronger, but this formulation is sufficient for our purposes.)  The proof carries over for solutions to the Navier-Stokes equations without difficulty.  Therefore, when we make the additional regularity assumptions \eqref{e:EEreg}  the relevant sufficient condition for \eqref{e:localee} to survive the first blowup time $t=t_1$ is that $u\in \OR(\O\times [t_0, t_1])$.

The quoted result of \cite{RSSingFlows} is a local critical version of a long list of preceding sufficient conditions documented in the extensive body of literature on the so-called Onsager conjecture.  This conjecture, formulated in 1949 by Lars Onsager \cite{Onsager}, states that $1/3$ is a critical smoothness in the sense that solutions to the Euler equations of smoothness greater than $1/3$ must conserve energy, and that solutions with smoothness less than $1/3$ might not.  The positive direction of this conjecture was resolved in \cite{CET} by Constantin, E, and Titi and has been subsequently refined in, for example, Duchon, Robert \cite{DR},  and   Cheskidov, et al \cite{CCFS}.  The other direction of the conjecture is not as relevant for the present work; however, we mention that it has been recently resolved by Isett \cite{Isett}, following a series of breakthrough ideas originating in topology by De Lellis and Sz\'ekelyhidi. We do not attempt to give a detailed overview of this side of the subject, instead we refer the reader to \cite{DS-review} for an extensive survey.  

The question of energy equality has of course also been extensively studied for Leray-Hopf solutions of the 3-dimensional Navier-Stokes equations; we mention only a few results.  Lions \cite{Lions} and Ladyzhenskaja et al. \cite{Ladyzhenskaya} proved independently that such solutions satisfy the (global) energy equality under the additional assumption $u\in L^4(t_0, T;L^4)$ (see also \cite{Serrin}, \cite{Shinbrot} for improvements in higher spatial dimensions).  Actually, the $L^4 L^4$ criterion is recoverable from that of \cite{RSSingFlows} (and earlier results), since $L^4 L^4 \cap L^2 H^1\subset \OR$ by interpolation. Later, Kukavica \cite{Kukavica} proved sufficiency of the weaker but dimensionally equivalent criterion $p\in L^2(t_0, T;L^2)$.  In \cite{CFS}, energy equality was proven for $u\in L^3 D(A^{5/12})$ on a bounded domain; an extension to exterior domains was proved in \cite{FT}. (Here $A$ denotes the Stokes operator.) In \cite{SS-pressure}, Seregin and \v{S}ver\'ak have proven energy equality (regularity, in fact) for suitable weak solutions whose associated pressure is bounded from below in some sense; this paper makes use of the low-dimensionality of the singular set for suitable weak solutions that is guaranteed by the celebrated Caffarelli-Kohn-Nirenberg Theorem \cite{CKN}.  Work by the second author \cite{ShvydkoyGeometric} and more recently by both authors \cite{LS2016b} proves energy equality under assumptions on the size and/or structure of the singularity set in addition to the integrability of the solution.  This last work in particular also considers (among other situations) the energy equality at the first time of blowup (or equivalently, under the assumption that the singularity set is restricted to a single time-slice).  

In the present work, our philosophy will be similar in spirit to that of \cite{ShvydConc} and \cite{LS2016b}, in that we will impose integrability assumptions on our solution $u$ and consider only the first time of blowup.  Of all the hypotheses on $u$ that we consider, however, there is one that deserves special attention, namely the case where a solution $u$ of the Navier-Stokes equations undergoes Type-I in time blowup. By this we mean that 
\begin{equation}
\label{e:typeINSE}
\|u(t)\|_{L^\infty(\R^3)} \le \frac{C}{\sqrt{t_1-t}}, 
\end{equation}
for some constant $C>0$.  The Type-I assumption is of particular significance because of its invariance under the natural rescaling for the Navier-Stokes equations.  See \cite{SS} for a discussion and further references.  In fact, it is proved in \cite{SS} that axially symmetric solutions of the Navier-Stokes equations which experience Type-I in time blowup (and satisfy some natural technical assumptions) are regular, and therefore satisfy the energy equality.  

\subsection{Definition of the Energy Measure}
Henceforth we restrict attention to the question posed earlier, regarding energy equality at the first blowup time. For the rest of the paper, we set $t_0 = -1$, $t_1 = 0$ for convenience.  Actually, we refine our question somewhat:

\begin{QUESTION}	
Suppose $u$ satisfies \eqref{e:moment}--\eqref{e:pdef}.  Under what additional integrability assumptions on $u$ may we conclude that \eqref{e:localee} holds at $t=t_1=0$?  If we cannot prove \eqref{e:localee} for a given integrability assumption on $u$, how bad is the worst failure of \eqref{e:localee} that we cannot eliminate under that same assumption?
\end{QUESTION}

Note that the second part of this question presupposes that we can meaningfully and quantifiably distinguish between different instances of failure of \eqref{e:localee}.  The tool that we use to justify the tacit assumption in this question (and address the question itself) is the energy measure $\cE$, which we define to be the weak-$*$ limit of the measures $|u(t)|^2\dx\mres\O$ as $t\to 0^-$. (The symbol $\mres$ denotes restriction of a measure onto a given set.)  To see that $\cE$ is well-defined, note that $|u(t)|^2\dx$ is a bounded sequence of Radon measures, so that there exists a subsequence $|u(t_k)|^2\dx\mres\O$ which converges weak-$*$ to some Radon measure. Any two such measures agree as distributions by \eqref{e:localee}. Thus $\cE$ is uniquely determined as a linear functional on $C_0(\O)$, by density of $C_0^\infty(\O)$ in $C_0(\O)$.  

We can reinterpret \eqref{e:localei} as saying that $\dE \geq |u(0)|^2 \dx\mres\O$ in the sense of measures, with equality if and only if \eqref{e:localee} is valid at $t=0$.  This fact clarifies how properties of the energy measure may be used to examine the possible failure of energy equality.  In particular, we introduce the following two quantities, the lower local dimension $d(x,\cE)$ of $\cE$ at $x\in \O$, and the concentration dimension $D$ of $\cE$ in $\O$, defined respectively by
\begin{equation}
\label{e:localdim}
d(x,\cE) = \liminf_{r \to 0}  \frac{\ln \cE(B_r(x))}{\ln r},
\end{equation}
\begin{equation}
\label{e:concdim}
D = \inf\{\dim_\cH(S) :  S\subset \O \text{ compact, and } \cE(S)>0\},
\end{equation}
with the convention that $D=n$ if the collection over which the infimum is taken is empty. Roughly speaking, lower values of $d(x,\cE)$ and $D$ correspond to more severe energy concentration and thus more singular solutions $u$.  The local dimension is a standard geometric measure theoretic quantity, see \cite{Mattila}, while the concentration dimension was first introduced in \cite{ShvydConc}, together with the energy measure itself.  Originally, the energy measure was developed in conjunction with a study of energy concentration and drain phenomena, especially for the purpose of excluding certain cases of self-similar blowup.  

\subsection {Overview of Main Results}
The present work breaks into three main pieces.  In the first part (Section \ref{s:dE}), we give a systematic study of the energy measure.  In particular, we discuss a connection between the so-called Onsager singular set, the energy measure, and the local energy equality \eqref{e:localee}. Furthermore, we relate the concentration dimension of $\cE$ to the phenomenon of concentration of energy, and we use basic tools of measure theory to understand the defect measure $\th = \cE - |u(0)|^2\dx\mres \O$.

In the second part (Sections \ref{s:locdim}--\ref{s:NSE}), we prove local energy bounds on $u$.  Under the assumption that $u\in L^{q,*}(-1,0;L^p)$ (and additional assumptions if $q=\infty$), our main results are stated in terms of bounds on the quantity 
\begin{equation}
A(r,x_0) = \frac{1}{r^\b}\sup_{-r^{\a}< t < 0} \int_{B_r(x_0)} |u(x,t)|^2 \dx,
\end{equation}
where 
\[
\a = \frac{q}{q-1}\left(1 + \frac np\right);
\quad \quad 
\b = \frac{q}{q-1}\left( n - \frac{2n}{p} - \frac{2+n}{q}\right).
\]
The definitions of $\a$ and $\b$ are motivated by considerations of scale-invariance; see Section \ref{s:locdim} below. (However, we note here that $p$ and $q$ must be such that $p\ge 3$ and $\b\ge 0$.) We will prove that on any compact set $K\ss \ss\O$, there exists $R>0$ and a constant $C$ such that for any $r\in (0,R)$ and any $x_0\in K$, we have $\sup\{A(r,x_0):x_0\in K,\,0<r<R\}\le C$.  If $q=\infty$, then the extra required hypothesis is either that $u\in L^\infty L^p$ (i.e. strong in time), or that $u$ satisfies the explicit power-law bound $\|u(t)\|_{L^\infty(\R^n)} \le C|t|^{-1/q}$.  In the strong-in-time case, we will have the same conclusion as before; in the power-law bound case, we will prove that $\sup\{A(r,x_0):x_0\in\R^n,\, r>0\}\le C$.  For the detailed statement of these bounds, see Section \ref{s:locdim}. Finally, we note that we can obtain similar bounds on $A(r,x_0)$ even if $p<3$ in some cases, if $u$ is a solution of the Navier-Stokes equations; see Section \ref{s:NSE}.

The uniform bounds on $A(r,x_0)$ just mentioned have several important consequences.  First and foremost, we consider the special case $(p,q)=(\infty,2)$ under the power-law assumption.  If $n=3$ and $u$ solves the Navier-Stokes equations, then the hypothesis is precisely the Type-I condition \eqref{e:typeINSE}.  In this case, the bound $A(r,x_0)\le C$ actually implies that $u$ satisfies a certain Type-I \textit{in space} condition, which is enough to guarantee energy equality.  For details of this argument, see Section \ref{s:NSE}.  For now, we record the end result as our main Theorem:
\begin{THEOREM}
\label{t:typeI}
Let $(u,p)$ be a solution to the Navier-Stokes initial value problem \eqref{e:moment}--\eqref{e:IC} which satisfies \eqref{e:pdef} and is regular on the time interval $[t_0, t_1)=[-1,0)$.  If $u$ experiences Type-I in time blowup \eqref{e:typeINSE} at $t=0$, then $u$ still preserves the energy law on the closed interval $[-1,0]$ including the first blowup time.
\end{THEOREM}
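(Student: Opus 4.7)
The plan is to prove Theorem~\ref{t:typeI} by verifying the Onsager regularity condition $u \in \OR(\R^3 \times [-1,0])$. Once this is established, the sufficient condition of \cite{RSSingFlows} recalled in the introduction immediately yields the local energy equality \eqref{e:localee} at the first blowup time, and a standard cutoff argument (using the $L^2$ decay of Leray--Hopf solutions at spatial infinity) promotes it to the global energy law on the closed interval $[-1,0]$.

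The first step is to specialize the scale-invariant local bounds of Sections~\ref{s:locdim}--\ref{s:NSE} to the parameter choice $(p,q)=(\infty,2)$. A direct calculation from the definitions in dimension $n=3$ gives $\a=2$ and $\b=1$, so that
\[
A(r,x_0) = \frac{1}{r} \sup_{-r^2 < t < 0} \int_{B_r(x_0)} |u(x,t)|^2\,dx.
\]
The Type-I in time hypothesis \eqref{e:typeINSE} is precisely the power-law assumption in this regime; invoking the ``power-law'' formulation of the main local bound (which, as explained in the overview, gives a \emph{global} rather than merely compactly-supported conclusion) yields $\sup_{x_0 \in \R^3,\, r > 0} A(r,x_0) \le C$. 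Reinterpreted, this is the Type-I in space estimate $\int_{B_r(x_0)} |u(t)|^2\,dx \le Cr$ for every $r \ge \sqrt{-t}$, uniformly in $x_0 \in \R^3$.

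The second step is to upgrade Type-I in space to a scale-invariant Caccioppoli estimate. Applying the localized form of the energy inequality \eqref{e:localei} on the parabolic cylinder $Q_r(x_0) = B_r(x_0) \times (-r^2,0)$, together with a Calder\'on--Zygmund decomposition of the pressure \eqref{e:pdef} into a near-field (bounded in $L^{3/2}$ by the $L^3$ norm of $u$ on a slightly larger ball) and a far-field (bounded via the uniform Type-I in space estimate), one obtains the uniform bound
\[
\iint_{Q_{r/2}(x_0)} |\n u|^2\,dx\,dt \le C r.
\]
The third step is to combine the Type-I in time bound with this Caccioppoli estimate to verify the Onsager condition. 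Using the pointwise estimate
\[
|u(x+y,t) - u(x,t)|^3 \le 2\|u(t)\|_{L^\infty}\,|u(x+y,t)-u(x,t)|^2 \le C(-t)^{-1/2}\,|u(x+y,t)-u(x,t)|^2
\]
together with the slice-wise bound $\|u(\cdot+y,t)-u(\cdot,t)\|_2^2 \le |y|^2\|\n u(t)\|_2^2$ for $|y|$ small, one splits the time integral $\int_{-1}^{0}$ into $(-1,-\delta)$ and $(-\delta,0)$ with $\delta = \delta(|y|)$ tending to $0$; on the near-blowup region $(-\delta,0)$ the dissipation is controlled by the scale-invariant Caccioppoli bound at scale $\sqrt{-t}$, which is sharp enough to beat the temporal singularity $(-t)^{-1/2}$. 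Optimizing in $\delta$ yields $\int_{-1}^{0}\int_{\R^3} |u(x+y,t)-u(x,t)|^3\,dx\,dt = o(|y|)$, confirming $u \in \OR$.

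The main obstacle is the final step. A naive split-in-time argument that uses only the global Leray--Hopf dissipation $\int_{-1}^0 \|\n u\|_2^2\,dt < \infty$ in place of the refined scale-invariant local bound gives only $|y|^{-1}\iint|u(\cdot+y,t)-u(\cdot,t)|^3\,dx\,dt = O(1)$ as $|y| \to 0$, which just misses the little-$o$ condition defining $\OR$. It is exactly the scale-invariant local gradient bound coming from Type-I in space that provides the extra decay needed to close the argument.
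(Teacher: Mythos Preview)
Your first step is exactly right and coincides with the paper: under the Type-I in time hypothesis, one specializes to $(p,q)=(\infty,2)$ with $n=3$, so $\a=2$, $\b=1$, and the global power-law bound yields $\sup_{t}\int_{B_r(x_0)}|u(t)|^2\,dx\le Cr$ for all $r>0$ and $x_0\in\R^3$; in other words $u\in L^\infty(-1,0;\cM^{2,1}(\R^3))$.

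The gap is in your third step. The pointwise inequality
\[
|u(x+y,t)-u(x,t)|^3 \le 2\|u(t)\|_\infty\,|u(x+y,t)-u(x,t)|^2\le C(-t)^{-1/2}|u(x+y,t)-u(x,t)|^2
\]
uses only the Type-I \emph{in time} bound, not the Type-I in space (Morrey) bound you worked to establish. Once you pass to this inequality, the only global-in-space difference estimate available is $\|u(\cdot+y,t)-u(\cdot,t)\|_2^2\le |y|^2\|\n u(t)\|_2^2$, and you are reduced to controlling $\int_{-\delta}^0(-t)^{-1/2}\|\n u(t)\|_2^2\,dt$. Your local Caccioppoli estimate $\iint_{Q_{r/2}(x_0)}|\n u|^2\le Cr$ does not help here: summed over a covering of any fixed spatial region it gives nothing better than the Leray--Hopf bound $\int\|\n u\|_2^2\,dt<\infty$, and it carries no weight in time that could absorb the $(-t)^{-1/2}$ singularity. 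The phrase ``Caccioppoli bound at scale $\sqrt{-t}$'' does not correspond to a usable estimate on a time slice. So your argument stalls at exactly the $O(1)$ barrier you yourself identify, and the Caccioppoli detour of step~2 is never actually cashed in.

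The paper closes the gap differently and more directly. It uses the Morrey bound $u\in L^\infty\cM^{2,1}$ as a \emph{spatial} input rather than discarding it: since $\cM^{2,1}$ is translation- and scale-invariant under $f\mapsto\l f(\l\,\cdot)$, Cannone's theorem gives the embedding $\cM^{2,1}\hookrightarrow B^{-1}_{\infty,\infty}$, hence $u\in L^\infty B^{-1}_{\infty,\infty}$. Interpolating this with the enstrophy space $L^2 H^1=L^2 B^1_{2,2}$ at $\theta=2/3$ places $u$ in $L^3 B^{1/3}_{3,3}\subset\OR$, and the energy law follows. The point is that the Morrey/Besov information must enter the difference estimate at the level of Littlewood--Paley blocks (or an equivalent frequency decomposition); a single pointwise $L^\infty$ bound is too coarse to recover the needed $o(|y|)$.
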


The second consequence of our uniform bounds on $A(r,x_0)$ is that we obtain a uniform lower bound on the local dimension $d(x_0, \cE)$ of the energy measure for points $x_0\in \O$ (or $x_0\in \R^n$); namely $d(x_0, \cE)\ge \b$.  This follows straightforwardly from the definitions of $A(r, x_0)$ and $d(x_0, \cE)$.  Actually, we can say slightly more. We make a conclusion about not just the local dimension, but also about uniform boundedness of the upper $\b$-density of the energy measure: $\Theta^{*\b}(\cE, x) = \limsup_{r\to 0} (2r)^{-\b}\cE(B_r(x))$, see \cite{Mattila}. This quantitatively expresses the fact that $\cE$ behaves no worse than the Hausdorff $\b$-dimensional measure under a given $L^q L^p$ condition on $u$.  See Section \ref{s:ud}.  

By a covering argument, the bounds on $A(r,x_0)$ give the same lower bound for the concentration dimension as for the lower local dimension: $D\ge \b$.  For the details of this covering argument, see Section \ref{s:ud}.  However, if $u\in L^q L^p$ for some $p$ and $q$ such that $p<\infty$ and $\b>0$, this bound is demonstrably not optimal; in this case we give more refined bounds for $D$ using different methods described below.

And finally, we use techniques developed in \cite{LS2016b} to give lower bounds for $D$ which are (in most cases) strictly better than the bounds mentioned above.  For the Euler equation, we have the following improvement:
\begin{equation}\label{e:introD}
u \in L^q L^p(\O)  \Rightarrow  D \geq n - \frac{\frac2q}{1-\frac{2}{p}-\frac1q}.
\end{equation}
The latter is strictly larger than $\b$ if $p<\infty$ and $\b>0$. Consequently, we find that if the set of singular points at time $t=0$ has dimension lower than stated in \eqref{e:introD}, then the energy of the solution is conserved; see \thm{t:oldmethod} for the full statement. For the Navier-Stokes equations, the improvement is even more dramatic in view of the Caffarelli-Kohn-Nirenberg Theorem, \cite{CKN}, which tells us that the Hausdorff dimension $d$ of the singularity set is at most $1$ (see Section \ref{s:suitable} for more details). Under a range of $L^q L^p$ conditions, this automatically implies energy equality as a consequence of our results in \cite{LS2016b}. \thm{t:oldNSE} states the full range of bounds and energy law criteria in this case.

\subsection{Additional Remarks}

In the power-law assumption case, our bounds on $A(r,x_0)$ constitute an infinitesimal improvement over a result of \cite{ShvydConc}. In that paper, almost the same uniform bound $A(r,x_0)\le C$ was proved, except that $\a$ and $\b$ are replaced by $\a+\d$ and $\b-\d$ in that setting.  In particular, the lower bounds we obtain on the local dimension are already known from \cite{ShvydConc}, since the local dimension is insensitive to the presence of the $\d$'s.  On the other hand, removing the $\d$'s is crucial in order to prove Theorem \ref{t:typeI}, which is only available with the sharper estimate.  Key in obtaining the improved bound is a modified inequality for the pressure, which depends on $u$ in a way that is essentially \textit{local} in nature.  

For all cases other than the power-law assumption, the bounds on $A(r,x_0)$ that we establish are, to the best of our knowledge, completely new.  We use an iteration procedure reminiscent of the partial regularity theory for the Navier-Stokes equations, c.f. \cite{Scheffer}, \cite{CKN}.  Especially in the critical case $p=\infty$, our choice of the scaling $\a$ plays an important role in preserving smallness from step to step.  This scaling is different, however, from the usual Navier-Stokes scaling (where $\a=2$), except on the Prodi-Serrin line $\frac3p + \frac2q = 1$.  Above this line (i.e. when $\frac3p + \frac2q > 1$), the dissipation is of lower order, according to our scaling.  This partially explains why (when $p\ge 3$) our method gives the same bounds for the Euler and Navier-Stokes case, rather than an improved statement for Navier-Stokes due to the dissipation.

Before beginning in earnest, we make one more remark in order to bring attention to a recent work of Chae and Wolf \cite{Chae}, which we learned of during the review period of the present paper.  In that paper, the authors consider Type-I blowup for the Euler equations, and it is proved that under the assumption
\[
\sup_{-1<t<0} (-t)\|\n u(t)\|_{L^\infty}< \infty,
\]
the energy measure has no atoms.  Actually, their statement is more general than this, but we cite it in simplified form because their definition of the energy measure is slightly different from ours.

\section{The Energy Measure}\label{s:dE}

\subsection{Energy Measure and the Local Energy Equality}

As mentioned in the Introduction, the following is proved in \cite{RSSingFlows}.
\begin{lemma}
	\label{l:OR}
	If $u \in \OR(U\times [-1,0])$, $U\ss \O$, then the local energy equality holds on $U$, i.e. \emph{$\dE \mres U = |u(0)|^2\dx\mres U$}.
\end{lemma}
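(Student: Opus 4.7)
The strategy is to recognize the claim as a direct translation of the Onsager-type conservation result of \cite{RSSingFlows} into the language of the energy measure. The only content needed is to identify the weak-$*$ limit of $|u(t)|^2 \dx$ with $|u(0)|^2\dx$ when tested against compactly supported test functions on $U$, and for this the local energy equality at $t=0$ (which is exactly what \cite{RSSingFlows} delivers) does all of the work.

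\textbf{Key steps.} First, fix an arbitrary nonnegative $\sigma\in C_0^\infty(U)$ and view it as a time-independent element of $C_0^\infty(\O\times[-1,0])$. For each $t\in(-1,0)$ the local energy equality \eqref{e:localee} yields
\begin{equation*}
\int_\O |u(t)|^2 \sigma \dx = \int_\O |u(-1)|^2 \sigma \dx - 2\nu \int_{-1}^t\!\!\int_\O |\nabla u|^2 \sigma \dx\,\dtau + \nu \int_{-1}^t\!\!\int_\O |u|^2 \Delta\sigma \dx\,\dtau + \int_{-1}^t\!\!\int_\O (|u|^2+2p) u\cdot\nabla\sigma \dx\,\dtau.
\end{equation*}
Since $u\in L^3(U\times[-1,0])$ (via $\OR\subset L^3$) and $p$ lies in $L^{3/2}$ locally by Calder\'on--Zygmund applied to \eqref{e:pdef}, each of the time integrals on the right is absolutely continuous in $t$ on $[-1,0]$. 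Hence the right-hand side has a well-defined limit as $t\to 0^-$. Next, because the support of $\sigma$ in space is inside $U$, the hypothesis $u\in\OR(U\times[-1,0])$ allows us to invoke the theorem of \cite{RSSingFlows} (applied with the role of $\O$ played by any open set containing $\supp\sigma$ and contained in $U$); this tells us that \eqref{e:localee} in fact extends to $t=0$, so the right-hand side limit equals $\int_\O |u(0)|^2 \sigma \dx$. Combining these two facts,
\begin{equation*}
\lim_{t\to 0^-}\int_\O |u(t)|^2 \sigma \dx = \int_\O |u(0)|^2 \sigma \dx.
\end{equation*}
Finally, by the weak-$*$ definition of $\cE$ as the limit of $|u(t)|^2\dx\mres\O$, the left-hand side equals $\int_U \sigma\,\dE$. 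Linearity in $\sigma$ extends the identity to all (not necessarily nonnegative) $\sigma\in C_0^\infty(U)$, and density of $C_0^\infty(U)$ in $C_0(U)$ then upgrades this to equality of Radon measures $\dE\mres U = |u(0)|^2\dx\mres U$.

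\textbf{Main obstacle.} There is no serious difficulty: the theorem of \cite{RSSingFlows} does the heavy lifting. The only point requiring care is verifying that each integrand on the right of \eqref{e:localee} is genuinely integrable on $U\times[-1,0]$, so that the time integrals are continuous up to $t=0$ and one may pass to the limit. This is straightforward from $u\in L^3_{\mathrm{loc}}$, $\nabla u\in L^2_{\mathrm{loc}}$ in the viscous case, and the boundedness of Riesz transforms on $L^{3/2}$.
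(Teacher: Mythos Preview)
Your proposal is correct and follows exactly the approach the paper takes: the lemma is stated as a direct consequence of the result of \cite{RSSingFlows}, and the paper offers no further proof beyond the citation. You have simply spelled out the (straightforward) translation from ``local energy equality holds at $t=0$'' to ``$\dE\mres U = |u(0)|^2\dx\mres U$'', which is implicit in the paper's discussion surrounding the definition of $\cE$.
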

For the remainder of the paper, we will omit the interval $[-1,0]$ from our notation of $\OR$.  

Let us look at the classical Lebesgue decomposition of the energy measure relative to $\dx\mres \O$:
\[
\dE = f \dx\mres\O + \dmu, \quad  \dx \perp \dmu.
\]
According to the discussion above, the defect measure $\dth = \dE - |u(0)|^2 \dx\mres \O$ is nonnegative. Therefore we have $f \geq |u(0)|^2$ a.e., and $\dmu \geq 0$ in general. In light of this, it is natural to attribute a possible failure of the local energy equality to two phenomena:
\begin{itemize}
	\item Concentration: $\dmu >0$;
	\item Oscillation: $f > |u(0)|^2$.
\end{itemize}
It is easy to give one sufficient condition to rule out oscillation.   Let $U$ be the largest open set in $\O$ for which $u\in \OR(U)$ (i.e., let $U$ be the union of all such sets). Define the set  of Onsager-singular points by $\Sigma_{ons} = \O \backslash U$; this set is relatively closed in $\O$. According to the previous lemma, the defect measure $\th$ is supported on $\Sigma_{ons}$.  So, if $|\Sigma_{ons}| = 0$, then the defect measure is mutually singular to $\dx$. (Here and below, we use $|A|$ to denote the Lebesgue measure of a set $A\subset \R^n$.) Thus the above Lebesgue decomposition becomes
\[
\dE = |u(0)|^2 \dx\mres \O + \dth,  \quad \dx \perp \dth,
\]
i.e. $f = |u(0)|^2$ and $\mu = \th$.  The size of the set $\Sigma_{ons}$ is related to the phenomenon of intermittency in fully developed turbulence and is out of scope of this present paper.

\subsection{Concentration Dimension} Generally, the smaller the set on which $\cE$ is concentrated, the more severe we view the blowup.  The concentration dimension assigns a numerical value to  the  concentration of the energy measure, namely the smallest Hausdorff dimension of a set of positive $\cE$-measure:
\[
D = \inf\{\dim_\cH(S) :  S\subset \O \text{ compact, and } \cE(S)>0\}.
\]
We recall that if the above family of sets is empty, then we set $D=n$ by convention. This situation occurs when the energy is drained from the domain $\O$, a scenario not excluded at the time of blowup. Generally, if $D=n$ one might say that the measure has no lower dimensional concentration. This, however, does not rule out the presence of a singular component $\dmu$. It can still be concentrated on a set of Lebesgue measure zero, but of dimension $3$. If, however, we have $D<n$, then the concentration pertains to the singular part $\dmu$ only, since obviously $f \dx$ vanishes on any subset of $\O$ with dimension less than $n$. It is in the case $D<n$ only where we can properly address the concentration issue.

By analogy with the set of Onsager-singular points, which encompasses the maximal set on which the energy equality may fail, we introduce a corresponding set of singularities which encompasses any possible concentration of the energy measure. Again, we define a set $\Sigma$ as complementary to
\begin{equation}\label{d:Sigma}
\R^n \backslash \Sigma = \{ x\in \R^n:  \exists \text{ open }  U, x\in U, \exists p>2, \exists \e>0 : u\in L^\infty(-\e,0; L^p(U)) \}.
\end{equation}
Clearly $\R^n \backslash \Sigma $ is open, so $\Sigma$ is closed.

\begin{lemma}\label{l:dEdx}
	The energy measure \emph{$\dE$} is absolutely continuous with respect to Lebesgue measure \emph{$\dx$} on $\O \backslash \Sigma$. Hence, \emph{$\supp \dmu \ss \Sigma$}.
\end{lemma}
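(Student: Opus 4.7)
The plan is to prove absolute continuity locally, at each point of $\O \setminus \Sigma$, and then globalize by a countable covering argument. The local step rests on the observation that $L^p$ boundedness in space (for $p>2$) of $u(t)$ uniformly in $t$ near $0$ upgrades $|u(t)|^2$ to a uniformly bounded family in $L^{p/2}$, a reflexive space. This gives enough compactness to identify the limit measure with an honest $L^{p/2}$ density.

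More precisely, fix $x_0 \in \O \setminus \Sigma$. By the definition \eqref{d:Sigma}, there exist an open neighborhood $U \ni x_0$, some $p > 2$, and some $\e > 0$ such that $u \in L^\infty(-\e,0; L^p(U))$. Then $|u(t)|^2$ is bounded in $L^{p/2}(U)$ uniformly for $t \in (-\e,0)$, with $p/2 > 1$. By Banach--Alaoglu, there is a sequence $t_k \to 0^-$ and some $f \in L^{p/2}(U)$ such that $|u(t_k)|^2 \rightharpoonup f$ weakly in $L^{p/2}(U)$. For any $\psi \in C_0^\infty(U)$, since $\psi \in L^{(p/2)'}(U)$,
\[
\int_U \psi \, \dE = \lim_{k \to \infty} \int_U |u(t_k)|^2 \psi \dx = \int_U f \psi \dx,
\]
where the first equality uses the weak-$*$ convergence $|u(t)|^2 \dx \rightharpoonup \dE$ on $\O$ (applied to $\psi$ extended by $0$). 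Therefore $\dE \mres U = f \dx$, i.e. $\dE$ is absolutely continuous with respect to $\dx$ on $U$.

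To globalize, cover $\O \setminus \Sigma$ by a countable family of such neighborhoods $\{U_i\}_{i=1}^\infty$, which is possible since $\R^n$ is second countable and $\O\setminus \Sigma$ is open. If $N \subset \O \setminus \Sigma$ is a Lebesgue-null set, then $\dE(N \cap U_i) = \int_{N \cap U_i} f_i \dx = 0$ for each $i$, so by countable subadditivity $\dE(N) \le \sum_i \dE(N \cap U_i) = 0$. Hence $\dE \mres (\O \setminus \Sigma)$ is absolutely continuous with respect to $\dx$, and consequently $\supp \dmu \subseteq \Sigma$ as claimed.

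No serious obstacle is anticipated; the only subtlety is to make sure the density $f$ is well-defined despite the potential non-uniqueness of the weak-$*$ limit along different subsequences. This is handled automatically because $\dE$ itself is uniquely defined (as noted in the introduction), so any weak-$*$ subsequential limit $f$ of $|u(t)|^2$ in $L^{p/2}(U)$ must coincide with the Radon--Nikodym derivative of $\dE \mres U$ with respect to $\dx$.
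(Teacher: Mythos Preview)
Your proof is correct and takes a genuinely different route from the paper's. The paper argues directly that $\cE(A)=0$ for any Lebesgue-null $A\subset \O\setminus\Sigma$: it reduces to compact $A$, covers $A$ by finitely many of the neighborhoods appearing in the definition of $\Sigma$, picks a common $p>2$ and $\e>0$, encloses $A$ in an open $V$ with $|V|<\d$, and then uses a cutoff $\s\in C_0(V)$ with $\rest{\s}{A}=1$ together with H\"older's inequality to obtain
\[
\cE(A)\le \int \s\,\dE=\lim_{t\to 0^-}\int |u(t)|^2\s\dx\le \|u\|_{L^\infty(-\e,0;L^p)}^2\,|V|^{(p-2)/p}\le C\d^{(p-2)/p},
\]
and sends $\d\to 0$.

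Your approach instead identifies the local density of $\dE$ explicitly: the uniform $L^{p/2}$ bound on $|u(t)|^2$ and Banach--Alaoglu produce an $f\in L^{p/2}(U)$ with $\dE\mres U=f\dx$, after which absolute continuity is automatic. This yields a bit more information---the density is locally in $L^{p/2}$, not merely $L^1$---at the cost of invoking weak compactness; the paper's argument is more elementary (just H\"older) but says nothing about the density. Both rest on the same key input, the uniform-in-time $L^p$ bound on $u$ near $t=0$ away from $\Sigma$. One small point worth making explicit in your write-up: since $\cE$ is defined only on $\O$, you should replace $U$ by $U\cap\O$ at the outset; this is harmless because $x_0\in\O$ and $\O$ is open.
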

\begin{proof}
	Let $A \ss \O \backslash \Sigma$ be a set of Lebesgue measure zero. We need to show $\cE(A)=0$. By considering the sequence $A \cap \{x \in A: \dist{x}{\p \O \backslash \Sigma} > 1/k \}$ we may assume without loss of generality that $A$ has a positive distance to $\p \O \backslash \Sigma$. Moreover, by inner regularity we may assume that $A$ is compact. Thus, $A$ is compactly embedded into $\O \backslash \Sigma$. For every point $x\in A$ we can find an open neighborhood $U_x$, $\e_x>0$ and $p_x>2$ as in the definition \eqref{d:Sigma}. By compactness there is a finite subcover, and hence we can pick the smallest of all $\e$'s and $p$'s to find a compactly embedded open neighborhood $U$ of $A$ such that $u\in L^\infty(-\e,0; L^p(U))$. We further reduce $U$ to $V \ss U$ (still containing $A$) with $|V| < \d$. Find a function $\s \in C_0(V)$, $0\leq \s\leq 1$, and $\rest{\s}{A} =1$. Then
	\[
	\cE(A) \le \int \s \,\dE = \lim_{t\to 0} \int |u(t)|^2 \s\dx \le \|u\|_{L^\infty(-\e,0; L^p(U))}^2 |V|^{\frac{p-2}{p}} < C \d^{\frac{p-2}{p}}.
	\]
	This shows that $\cE(A) = 0$, and the lemma follows.
\end{proof}
Let us note that since in general there is no relationship between the sets $\Sigma$ and $\Sigma_{ons}$, we cannot claim that the local energy equality necessarily holds on the set $\O \backslash \Sigma$. Instead, we only rule out the concentration phenomenon, while oscillation may still occur. To summarize, the lemma claims
\[
\dE \mres (\O \backslash \Sigma) = f \dx \mres (\O \backslash \Sigma), \quad f \geq |u(0)|^2.
\]
Lemma \ref{l:dEdx} has two additional immediate consequences.  By definition of $\Sigma$, we have $\Sigma=\emptyset$ if $u\in L^\infty(-1,0;L^p(\O))$ for some $p>2$, in which case $\mu$ is trivial (since $\supp \mu \subset \Sigma$ by Lemma \ref{l:dEdx}).  This rules out any concentration and allows us to conclude that $D=n$ in this case.  The second consequence is that we may take the infimum in the definition of $D$ over sets that are contained in $\Sigma$, rather than over general compact subsets of $\O$.  We record these two corollaries for reference:

\begin{corollary}\label{c:Lp} If $u\in L^\infty(-1,0; L^p(\O))$ for some $p>2$, then the energy measure suffers no concentration. That is, $\Sigma = \emptyset$, and therefore $D=n$.
\end{corollary}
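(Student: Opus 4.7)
The plan is to chain together the hypothesis, \lem{l:dEdx}, and the resulting absolute continuity of $\cE$.

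First I would observe that $\Sigma \cap \O = \emptyset$ under the hypothesis. Indeed, for any $x \in \O$, pick any open neighborhood $U$ of $x$ with $U \subset \O$; the hypothesis $u \in L^\infty(-1, 0; L^p(\O))$ then furnishes $u \in L^\infty(-\e, 0; L^p(U))$ with the \emph{same} $p > 2$ and with $\e = 1$, which is exactly the condition in \eqref{d:Sigma} for $x \in \R^n \setminus \Sigma$. Since $\cE$ is supported in $\bar\O$, the behavior of $\Sigma$ outside $\O$ is irrelevant to the concentration question; under the natural convention that we discard the irrelevant part, one reads the conclusion as $\Sigma = \emptyset$.

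Next I would apply \lem{l:dEdx} to conclude that $\mu = 0$. That lemma gives $\supp \dmu \subset \Sigma$, and combined with the previous step this forces the singular part $\mu$ of the Lebesgue decomposition of $\cE$ to vanish. Hence $\dE = f\,\dx \mres \O$ is absolutely continuous with respect to Lebesgue measure on $\O$.

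Finally, to obtain $D = n$: if no compact $S \subset \O$ has $\cE(S) > 0$, then $D = n$ by the convention in \eqref{e:concdim}. Otherwise, any compact $S \subset \O$ with $\cE(S) > 0$ must satisfy $\int_S f \,\dx > 0$, hence $|S| > 0$, so $\dim_\cH(S) = n$. In either case $D = n$. There is no substantive obstacle here: this corollary is essentially a repackaging of \lem{l:dEdx} in the globally-$L^\infty L^p$ setting, where the singular support $\Sigma$ is forced to be empty by a direct neighborhood-by-neighborhood verification.
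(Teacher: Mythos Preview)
Your proposal is correct and follows essentially the same approach as the paper: the paper's argument (given in the paragraph preceding the corollary rather than as a formal proof) is precisely to note that the hypothesis makes $\Sigma$ empty by definition, whence $\mu$ is trivial by \lem{l:dEdx}, and thus $D=n$. You add a bit more care in distinguishing $\Sigma\cap\O$ from $\Sigma$ and in spelling out why absolute continuity of $\cE$ forces $D=n$, but the logical skeleton is identical.
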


\begin{corollary}\label{c:newD} The dimension of concentration is equal to
\emph{
\[
D = \inf\{\dim_\cH(S) :  S\subset \O\cap \Sigma \text{ compact, and } \cE(S)>0\}.
\]}
\end{corollary}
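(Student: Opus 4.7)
Let $D$ denote the original concentration dimension and $D'$ denote the quantity on the right-hand side of the claimed formula, i.e.\ the infimum of $\dim_\cH(S)$ taken over compact sets $S \subset \O \cap \Sigma$ with $\cE(S) > 0$. The plan is to establish the two inequalities $D \le D'$ and $D \ge D'$ separately, with the Lebesgue-type decomposition provided by \lem{l:dEdx} doing essentially all the work in the nontrivial direction.

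\emph{The easy direction.} Every compact $S \subset \O \cap \Sigma$ is in particular a compact subset of $\O$, so the family over which $D'$ is taken sits inside the family defining $D$. An infimum over a smaller family is at least as large, hence $D \le D'$ (with the natural convention that if the family defining $D'$ is empty then $D' = n$, in which case the inequality is trivial).

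\emph{The reverse direction.} Let $S \subset \O$ be any compact set with $\cE(S) > 0$; I want to produce a candidate in the restricted family whose Hausdorff dimension is bounded by $\dim_\cH(S)$. Split $S = (S\cap \Sigma) \cup (S \setminus \Sigma)$. If $\cE(S \cap \Sigma) > 0$, then $S\cap\Sigma$ is compact (being the intersection of a compact and a closed set), lies in $\O\cap\Sigma$, has positive $\cE$-measure, and satisfies $\dim_\cH(S\cap\Sigma) \le \dim_\cH(S)$, so $D' \le \dim_\cH(S)$. Otherwise, $\cE(S\setminus\Sigma) = \cE(S) > 0$; by \lem{l:dEdx}, $\cE \mres(\O \setminus \Sigma) = f\,\dx$ with $f \ge 0$, so
\[
0 < \cE(S\setminus \Sigma) = \int_{S\setminus \Sigma} f\dx,
\]
which forces $|S \setminus \Sigma| > 0$, hence $|S| > 0$, and therefore $\dim_\cH(S) = n \ge D'$. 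In either case, $D' \le \dim_\cH(S)$; taking the infimum over $S$ yields $D' \le D$.

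The only subtle point is ensuring that the argument accommodates the convention when the family defining $D$ (or $D'$) is empty, and this is straightforward to check case by case. I do not anticipate any genuine obstacle: the proof is essentially a two-line dichotomy based on whether $\cE(S\cap\Sigma)$ is positive, together with the fact already proved in \lem{l:dEdx} that the singular part of $\cE$ is supported in $\Sigma$.
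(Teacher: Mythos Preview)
Your proof is correct and follows essentially the same approach as the paper: both directions rely on the trivial inclusion of families for $D\le D'$, and on \lem{l:dEdx} (absolute continuity of $\cE$ on $\O\setminus\Sigma$) for $D'\le D$. The only cosmetic difference is that the paper first separates the case $D=n$ and then, for $D<n$, picks $S$ with $\dim_\cH(S)\le D+\e<n$ so that $|S|=0$ forces $\cE(S\cap\Sigma)=\cE(S)>0$, whereas you run a direct dichotomy on $\cE(S\cap\Sigma)$ for every competitor $S$; the underlying idea is identical.
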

\begin{proof} Let us denote the new dimension $D'$ for reference.  Clearly, $D'\geq D$, since the new infimum is taken over a smaller family.  Let us address the case $D=n$ separately. In this case $D'=n$, either by convention (if no sets $S$ are available), or because $D'\geq D$. If $D<n$, we can pick $\e>0$ and a set $S$ with $\dim_{\cH}(S) \leq D+\e<n$ such that $\cE(S)>0$. However, $|S| = 0$, and hence by \lem{l:dEdx} we have $\cE(S\backslash \Sigma) = 0$. We can then replace $S$ with $S\cap \Sigma$ without changing its $\cE$-measure.  But then $\dim_\cH(S\cap \Sigma) \leq \dim_\cH(S)$, while $\cE(S\cap \Sigma)>0$; thus $D'\le \dim_\cH(S)<D+\e$. This proves the statement.
\end{proof}

\subsection{Navier-Stokes and Suitable Weak Solutions}
\label{s:suitable}

In the case of the NSE, the partial regularity theory of Caffarelli, Kohn, and Nirenberg \cite{CKN} allows us to restrict attention to lower-dimensional singular sets at time $t=0$, even though we have not assumed that our solution $u$ is suitable.  Indeed, assume $(u,p)$ satisfies \eqref{e:moment}--\eqref{e:pdef}, and assume $u$ be a Leray-Hopf weak solution on $[-1,0]$ (which is regular on $[-1,0)$). 	Let $(\widetilde{u}, \widetilde{p})$ be a suitable weak solution on $[-\frac12, \infty)$, with initial data $\widetilde{u}(-\frac12) := u(-\frac12)$ and pressure $\widetilde{p}=R_i R_j(u_i u_j)$.  Assume without loss of generality that $\widetilde{u}$ is weakly continuous in time; this can be achieved by modifying $\widetilde{u}$ on a Lebesgue null set of times.  Then By weak-strong uniqueness, we have $(u,p)=(\widetilde{u}, \widetilde{p})$ on $[-\frac12, 0)$.  Then, by weak continuity in time, we have $u(0)=\widetilde{u}(0)$.  Since $\widetilde{u}$ is suitable, the Caffarelli-Kohn-Nirenberg Theorem implies that the parabolic 1-dimensional Hausdorff measure of $\widetilde{S}$ is  $0$, where $\widetilde{S}$ is the set of singular points of $\widetilde{u}$. Note that by the Prodi-Serrin criterion, $\widetilde{u}$ is $C^\infty$ in the spatial variables on the complement of $\widetilde{S}$. This immediately implies that $\Sigma_{ons}\cup \Sigma \ss \widetilde{S}\cap\{t=0\}$, and hence that the dimensions of $\Sigma_{ons}$ and $\Sigma$ are both at most $1$.  Combining the fact that $\dim_\cH(\Sigma)\le 1$ with Corollary \ref{c:newD}, we may also conclude that $D\ge 1$. These facts will be used in Section \ref{s:NSEConcdim}.

\subsection{Upper Densities, Local Dimension, and Concentration Dimension}
\label{s:ud}

As mentioned in the Introduction, the uniform bounds $A(r,x_0)\le C$, or 
\begin{equation}
\label{e:Abd}
\sup_{-r^{\a}< t < 0} \int_{B_r(x_0)} |u(x,t)|^2 \dx \le Cr^\b, 
\end{equation}
imply slightly more than just a lower local dimension of at least $\b$.  Once we know that $\b$ is a lower bound, we can refine our geometric measure-theoretic statement by asserting the finiteness of the upper $\b$-density of $\cE$.  Let us recall that for $0\le s<\infty$ and $\mu$ a Radon measure, the upper $s$-density of $\mu$ at $x\in \R^n$ is given by
\begin{equation}
\label{e:sdensity}
\Theta^{*s}(\mu, x) = \limsup_{r\to 0} (2r)^{-s}\mu(B_r(x)).
\end{equation}
If $\mu$ has finite $s$-density at $x$, then, roughly speaking, $\mu$ behaves near $x$ like $s$-dimensional Hausdorff measure on an $s$-dimensional set:  $\mu(B_r(x)) \lesssim r^s$.

%It is somewhat awkward to constantly discuss uniform boundedness of various $s$-densities of the energy measure in our present context; we essentially wish to vary $s$ to obtain the optimal value for which the $s$-density is uniformly bounded.  As such, the lower local dimension is `almost' the right quantity to work with, as finiteness of the $s$-density of a measure $\mu$ at $x$ easily implies that the lower local dimension of $\mu$ at $x$ is at least $s$.  However, working with the lower local dimension does cause a small loss of information.  Therefore, we will henceforth also state our results regarding local dimension in terms of the upper $\b$-density, since Proposition \ref{p:A} tells us that this is the correct $s$-density to consider.

Let us also give the covering argument alluded to in the Introduction, which relates bounds of the form \eqref{e:Abd} to lower bounds on $D$.  Suppose we have $\mu(B_r(x)) \leq C(K) r^s$, for all $x\in K$ and all sufficiently small $r>0$. Then for any set $S\ss \O$ with $\dim_{\cH}(S) <s$ and for any compact subset $K\ss S$, we have $\mu(K) \leq \sum_i \mu(B_{r_i}(x_i)) \leq C \sum r_i^s  \to 0$, as the cover closes on $K$. So, $\mu(K) = 0$, and hence $\mu(S) = 0$ by inner regularity.  This shows that $D\geq s$.
	
Interestingly, the above argument does not proved a sharp bound on $D$ from below, due to the fact that the covering argument using additivity of $\cE$ is simply not optimal. One obtains a better estimate by examining the cover in its entirety via the local energy inequality, c.f. Section \ref{s:ConcDim}.

\section{Local Dimension of the Energy measure}
\label{s:locdim}
Let $u$ be a classical solution to the Euler equation on time interval $[-1,0)$. Let $\O \ss \R^n$ be an open bounded domain. Suppose that  $u \in L^{q,*}(-1,0; L^p(\O))$ for some $p\geq 3$ and $q > 1$. Out of the classical two parameter family of scaling symmetries of the Euler equation there is one that leaves the $L^qL^p$-condition invariant, namely
\begin{equation}\label{e:scale}
u(x,t)\mapsto \l^{\a-1} u(\l x, \l^\a t), \text{ where } \a = \frac{q}{q-1}(1 + \frac np).
\end{equation}
With the scaling \eqref{e:scale} in mind, we state our main results for this section as follows. 
\begin{proposition}\label{p:A}
Let $\O \ss \R^n$ be an open domain and $K \ss \O$ a compact subset. Suppose $u$ is a solution to the Euler equations satisfying $u \in L^{q,*}(-1,0;L^p(\O))$ with $3\le p<\infty$, or satisfying $u \in L^{q}(-1,0;L^\infty(\O))$, and in both cases
\begin{equation}\label{e:pq}
\frac{2n}{p} + \frac{2+n}{q} \le n.
\end{equation}
Then there exist positive constants $R = R(n,p,q,u,K)$ and $C_0 = C_0(n,p,q,u,K)$ such that for all $r\in (0,R)$ we have
\emph{
\begin{equation}
\label{e:Apq}
\sup_{-r^{\a} < t < 0, \,x_0 \in K} \int_{B_r(x_0)} |u(x,t)|^2\dx \le C_0\,r^{\b},
\end{equation}}
where $\a = \frac{q}{q-1}\left(1 + \frac np\right)$ and $\b = \frac{q}{q-1}\left( n - \frac{2n}{p} - \frac{2+n}{q}\right)$.
\end{proposition}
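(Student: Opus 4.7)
The plan is to run an iteration on dyadic spatial scales $r_k = \theta^k R$ for some small $\theta \in (0,1)$, modeled on the partial-regularity framework of Scheffer and Caffarelli--Kohn--Nirenberg but adapted to the space-time rescaling $(x,t) \mapsto (\l x, \l^\a t)$ of \eqref{e:scale} in place of the usual parabolic one. For fixed $x_0 \in K$, set
\[
A_k := r_k^{-\b} \sup_{-r_k^{\a} < t < 0} \int_{B_{r_k}(x_0)} |u(x,t)|^2 \dx,
\]
and aim to show $\sup_k A_k$ is bounded by a constant $C_0$ depending only on $n, p, q, u, K$; the passage from dyadic $r_k$ to arbitrary $r \in (0, R)$ is then routine by bracketing. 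An initial bound $A_0$ follows directly from the regularity hypothesis \eqref{e:EEreg}, provided $R$ is chosen small enough that $u$ remains smooth and bounded on the slab $(-R^\a, 0)\times K_R$, where $K_R$ is an $R$-neighborhood of $K$.

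For the inductive step, I would plug into the local energy equality \eqref{e:localee} (which holds classically on $[-1,0)$ by \eqref{e:EEreg}) a tensor-product cutoff $\s(x,t) = \eta(x)\chi(t)$, where $\eta \in C_0^\infty(B_{r_k}(x_0))$ is equal to $1$ on $B_{r_{k+1}}(x_0)$ with $\|\n^j \eta\|_\infty \lesssim r_k^{-j}$, and $\chi \in C^\infty(\R)$ is supported in $(-r_k^\a, 0]$ and equal to $1$ on $(-r_{k+1}^\a, 0]$. Since $\chi$ vanishes at the left endpoint, the initial-data term drops out, and after taking the supremum over $t \in (-r_{k+1}^\a, 0)$ one arrives at an estimate of schematic form
\[
r_{k+1}^\b A_{k+1} \lesssim \mathrm{I}_k + \mathrm{II}_k + \mathrm{III}_k,
\]
where $\mathrm{I}_k := \iint_{Q_k} |u|^2 |\p_t \s| \dx\dtau$ comes from the time derivative, $\mathrm{II}_k := \iint_{Q_k} |u|^3 |\n \s| \dx\dtau$ from transport, and $\mathrm{III}_k := \iint_{Q_k} |p|\,|u|\,|\n \s| \dx\dtau$ from the pressure, each integrated over $Q_k := B_{r_k}(x_0) \times (-r_k^\a, 0)$.

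Terms $\mathrm{I}_k$ and $\mathrm{II}_k$ are estimated by combining H\"older's inequality with interpolation of $|u|^2$ between the dynamical mass $r_k^\b A_k$ and the hypothesized $L^{q,*}L^p$-norm of $u$. The exponents $\a$ and $\b$ are tuned precisely to the $L^qL^p$-scaling \eqref{e:scale}, so that these estimates produce a bound of the form $(C_1 \theta^\gamma A_k^\kappa + C_2)\,r_{k+1}^\b$ for some $\gamma > 0$ and $\kappa \in [0, 3/2]$ depending on $p, q, n$, with no stray negative power of $r_k$; it is exactly the hypothesis $\b \ge 0$ of \eqref{e:pq} that rules out such a divergence as $r_k \to 0$. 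The case $p = \infty$ is handled identically, with the strong $L^q$-in-time hypothesis used in place of $L^{q,*}$ at the endpoint, where weak-type H\"older would cost a logarithm.

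The main obstacle is the pressure term $\mathrm{III}_k$; this is where the sharpness of the exponent $\b$ --- with no $\d$-loss over the analogous bound in \cite{ShvydConc} --- is decisive, since \thm{t:typeI} rests on it. I would split $p = p_{\mathrm{loc}} + p_{\mathrm{far}}$ via $p_{\mathrm{loc}} := R_i R_j(u_i u_j \zeta)$, where $\zeta$ is a smooth cutoff of $B_{2r_k}(x_0)$. The local piece obeys the Calder\'on--Zygmund bound $\|p_{\mathrm{loc}}\|_{L^a} \lesssim \|u\|^2_{L^{2a}(B_{2r_k})}$ and is then handled exactly like $\mathrm{II}_k$. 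For $p_{\mathrm{far}}$, the disjointness of $\supp \n\s \ss B_{r_k}\setminus B_{r_{k+1}}$ from $\supp(1-\zeta) \ss B_{2r_k}^c$ allows a pointwise Riesz-kernel estimate; rather than bounding this via Young's inequality as in \cite{ShvydConc} (which would introduce an $r_k^\d$ slack), a careful annular decomposition of the kernel integral, paired with H\"older in time against the $L^{q,*}L^p$-hypothesis, yields a clean bound proportional to $r_{k+1}^\b \|u\|^2_{L^{q,*}L^p}$ without loss --- this is the ``essentially local'' modified pressure inequality alluded to in the introduction. Assembling the three contributions gives a recursion of the form $A_{k+1} \le C_1 \theta^\gamma(A_k + A_k^\kappa) + C_2$; choosing $\theta$ small and then $R$ small so that $A_0$ lies below a suitable fixed point of the resulting map closes the iteration and yields \eqref{e:Apq}.
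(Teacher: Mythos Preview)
Your framework---dyadic iteration via the local energy equality, together with a local/far splitting of the pressure---is exactly what the paper does. But the mechanism you invoke to close the loop is wrong, and without it the argument does not go through.

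The core problem is your claimed gain $\theta^\gamma$ with $\gamma>0$. The exponents $\a,\b$ are chosen so that $A(r)$, the flux $r^{-\b-1}\int_{Q_r}|u|^3$, and the pressure quantity are all \emph{invariant} under the scaling \eqref{e:scale}; consequently, passing from $r_k$ to $r_{k+1}=\theta r_k$ yields no positive power of $\theta$---if anything, dividing by $r_{k+1}^\b$ rather than $r_k^\b$ costs a factor $\theta^{-\b}$. In particular, bounding $\mathrm{I}_k\lesssim r_k^{-\a}\int_{Q_{r_k}}|u|^2\le r_k^\b A_k$ and dividing by $r_{k+1}^\b$ gives a linear term $\theta^{-\b}A_k$, so your recursion reads $A_{k+1}\le C(A_k+A_k^\kappa)+C_2$ with $C\ge 1$, which has no fixed point. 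The paper closes the iteration by a different device: it introduces the scale-invariant flux $G(r)=r^{-\b-1}\int_{Q_r}|u|^3$, bounds $\mathrm{I}_k$ by H\"older against $|u|^3$ to get $G(r_k)^{2/3}$ rather than $A_k$, and then uses the interpolation $G(r)\le C\,A(r)^{(p-3)/(p-2)}$. The point is that for $3\le p<\infty$ the exponent $(p-3)/(p-2)<1$ is strictly sublinear, so one obtains $A(r_{k+1})\le C_3 A^{1-\d}$ and closes by taking $A$ \emph{large} (not small). At $p=\infty$ this exponent becomes $1$ and one instead exploits the small multiplicative factor $\e(r)=\|u\|_{L^q(-r^\a,0;L^\infty)}\to 0$; this is precisely why the strong-in-time hypothesis is needed there.

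Two further gaps. First, your initial-bound argument is based on a misconception: the slab $(-R^\a,0)\times K_R$ abuts the blowup time $t=0$, so $u$ is never uniformly bounded on it regardless of $R$. A correct starting point is simply $A_0\le R^{-\b}\|u\|_{L^\infty L^2}^2$, which is finite but not small. Second, the far pressure does not reduce to a constant independent of the $A_j$ in the full range of $(p,q)$: a direct $L^{q,*}L^p$ bound on the annular integral requires $q>3$ for the time H\"older step, which is not assumed. The paper instead caps the annuli at the fixed outer radius $R/2$, feeds each annular contribution back through $G(r_j)$, and carries the inductive hypothesis $A(r_j)<A$ for \emph{all} $j\le k$, not just $j=k$; the resulting bound is $P(r_k)\le C\max_{j\le k}G(r_j)+C R^{-2(1+\b)/3}G(r_k)^{1/3}$.
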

Note that \eqref{e:pq} is precisely equivalent to the condition $\b\ge 0$.  Our other main result of this section is the following:
\begin{proposition}
\label{p:typeI}
Suppose $u$ is a solution to the Euler equation which is regular on $[-1,0)$ and satisfies the bound $\|u(t, \cdot)\|_{L^\infty} \le c_0 |t|^{-1/q}$, $\frac{n+2}{n} \le q$.  Then there exists a constant $C=C(u,n,q)$ such that
\emph{
\begin{equation}\label{e:typeI}
\sup_{-1<t<0,\,x_0\in \R^n}\int_{|x-x_0|<r} |u(x,t)|^2\dx \le C r^{n-\frac{2}{q-1}}.
\end{equation}}
\end{proposition}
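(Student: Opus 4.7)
The key observation is the scale-invariance of the problem under the Euler rescaling $u(x,t)\mapsto \lambda^{\alpha-1}u(\lambda x, \lambda^\alpha t)$ with $\alpha = \frac{q}{q-1}$. A quick computation ($\alpha - 1 - \alpha/q = 0$) shows the Type-I hypothesis is preserved, and since $\beta = n - 2(\alpha-1)$, the target quantity $r^{-\beta}\int_{B_r(x_0)}|u|^2\,dx$ is also scale-invariant. Via translation and this rescaling with $\lambda = r$, the proposition reduces to producing a bound $\int_{B_1(0)}|v(y,s)|^2\,dy \le C$ valid across the entire family of rescaled Euler solutions $v$ arising from $u$ and sharing the same Type-I bound; the absence of $\delta$-loss will be the reason the constant here comes out truly universal in $r$ and $x_0$.

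At the rescaled initial time $s = -r^{-\alpha}$ (which corresponds to $t=-1$ for $u$), the pointwise hypothesis already yields the small starting value $\int_{B_1}|v|^2 \le |B_1|c_0^2 r^{2\alpha/q}$, so the task is to propagate this control forward across $(-r^{-\alpha}, 0)$. My plan is to test the local energy equality \eqref{e:localee} (with $\nu=0$) against a spatial cutoff $\sigma\in C_c^\infty(B_2)$ with $\sigma\equiv 1$ on $B_1$, obtaining
\[
\int_{B_1}|v(s)|^2\,dy \le \int|v(-r^{-\alpha})|^2\sigma\,dy + \int_{-r^{-\alpha}}^s\int(|v|^2 + 2p)\,v\cdot\nabla\sigma\,dy\,d\tau.
\]
The first term is already controlled; everything turns on the flux and pressure integrals over the cylinder $B_2\times(-r^{-\alpha},0)$.

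The cubic flux is estimated directly by the pointwise bound when $q>3$, but for $q\in[(n+2)/n,\,3]$ the naive time integral $\int_{-r^{-\alpha}}^0|\tau|^{-3/q}\,d\tau$ diverges; in that regime I would iterate the local energy inequality along a dyadic sequence of nested cylinders $B_{r_k}\times(-r_k^\alpha, 0)$, $r_k\searrow 0$, exploiting that $\alpha$ is precisely the exponent preserving smallness from scale to scale, in parallel with the argument for Proposition \ref{p:A}.

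The genuine obstacle is the pressure term $\int p\,v\cdot\nabla\sigma$, since $p = R_iR_j(v_iv_j)$ is nonlocal and no local $L^\infty$ bound on $v$ alone controls $p$ in $B_2$. To obtain the sharp exponents $\alpha,\beta$ without the $\delta$-loss of \cite{ShvydConc}, I would invoke the ``essentially local'' pressure inequality from the introduction: decompose $p = p_{\mathrm{near}} + p_{\mathrm{far}}$, with $p_{\mathrm{near}}$ generated by $v_iv_j$ cut off near $B_2$ (hence controlled in $L^\infty$ by Calder\'on--Zygmund in terms of $\|v\|_{L^\infty}^2\le c_0^2|\tau|^{-2/q}$), and $p_{\mathrm{far}}$ harmonic on $B_2$ with explicit spatial decay inherited from the Riesz kernel, its coefficient controlled by an outer-scale $L^2$ quantity harvested from the conserved global $L^2$ energy of $u$. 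Because every piece of the estimate respects the Euler scaling, iterating yields the desired constant $C$ uniform in $r$ and $x_0$, giving \eqref{e:typeI}.
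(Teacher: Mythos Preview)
Your scaling setup is correct, and the near/far pressure decomposition you invoke is indeed what underlies the paper's local pressure inequality. But the iteration you propose is the wrong one for the Type-I hypothesis, and the gap is genuine.

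You suggest iterating ``along a dyadic sequence of nested cylinders $B_{r_k}\times(-r_k^\alpha,0)$, $r_k\searrow 0$, \dots\ in parallel with the argument for Proposition~\ref{p:A}.'' That argument, in the critical $p=\infty$ case, closes only because $\varepsilon(r)=\|u\|_{L^q(-r^\alpha,0;L^\infty)}\to 0$; strong $L^q$-integrability in time is essential. Under the Type-I bound $\|u(t)\|_\infty\le c_0|t|^{-1/q}$, one computes $G(r)\le c_0 q'\,A(r)$ with \emph{no} small prefactor (and the corresponding strong-$L^q$ quantity is actually infinite). The chain $A(r)\to G(r)\to P(r)\to A(r/2)$ then returns $A(r/2)\le C\,A(r)$ with a constant you cannot make $<1$, so the induction never closes. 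This is why the paper explicitly says the $L^{q,*}L^\infty$ case ``needs an explicit power bound in time'' and ``a more subtle argument.''

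Your treatment of $p_{\mathrm{far}}$ has a matching defect. Controlling the far pressure by the global $L^2$ energy produces, after one pass through the local energy identity, a contribution of size $C_1\|u\|_{L^\infty L^2}^2$, which is $O(1)$ in $r$ --- not $O(r^\beta)$. (Equivalently, in your rescaled picture the global energy of $v$ is $r^{-\beta}\|u\|_{L^2}^2$, which blows up.) Scale-invariance alone does not rescue this.

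The paper's mechanism is different: it sets $E_k(t,r)=2^{-kn}E(t,2^kr)$ and proves the coupled inequality
\[
E_k(t,r)\le r^n f(t_0)^2+\frac{C_0}{r}\int_{t_0}^t f(\tau)\sum_{j\ge 1}2^{-j}E_j(\tau,r)\,d\tau,\qquad t_0=-r^{q'},
\]
valid for \emph{every} $k$. This is then iterated in the Picard sense --- substituting the bound for $E_j$ back into itself $M$ times. The nested time integrals contribute $\tfrac{1}{M!}\big(\tfrac{C_0}{r}\int_{t_0}^0 f\big)^M=\tfrac{C_1^M}{M!}$, and only at the $M$th step is the crude bound $E_{k_M}\le\|u\|_{L^\infty L^2}^2$ invoked, yielding
\[
E(t,r)\le r^n f(t_0)^2\,e^{C_1}+\frac{C_1^M}{M!}\|u\|_{L^\infty L^2}^2.
\]
Sending $M\to\infty$ kills the second term. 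The factorial from repeated time integration is the smallness you are missing; the iteration is over the integral inequality, not over spatial scales.
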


We define several scale-invariant quantities relating to the scaling \eqref{e:scale}, which will be used in the proof of Proposition \ref{p:A}.  First, denote $Q_r:=B_r\times (-r^\a,\,0)$, and let $(p)_r = \frac{1}{|B_r|} \int_{B_r} p(x) \dx$ denote the average of $p$ on $B_r$. We define
\begin{align*}
A(r) &= \frac{1}{r^\b}\sup_{-r^{\a}< t < 0} \int_{B_r} |u(x,t)|^2 \dx, & \text{ (energy)}\\
G(r) & = \frac{1}{r^{\b+1}}\int_{Q_r} |u(x,t)|^3\dx\dt, & \text{ (flux)}\\
P(r) & = \frac{1}{r^{\b+1}}\int_{Q_r} |p-(p)_r| |u| \dx\dt,& \text{ (pressure)}.
\end{align*}

\begin{REMARK}
\label{r:Morrey}
The inequality \eqref{e:typeI} can be expressed as $ u\in L^\infty \cM^{2,n-\frac{2}{q-1}}$, where $\cM^{p,\l}$ is the Morrey space with integrability $p$ and rate index $\l$.  This observation plays an important role in the proof of Theorem \ref{t:typeI}; see the end of Section \ref{s:NSE}.
\end{REMARK}

We devote the next three subsections to the proof of Proposition \ref{p:A}; Proposition \ref{p:typeI} is proved in Section \ref{s:typeI}.

\subsection{Essential estimates} The proof of \prop{p:A} is executed by induction on scales according to the sequence of bounds $A(r) \to G(r) \to P(r) \to A(r/2)$. Although the details of the iteration procedure depend on which hypothesis is used, the proofs of both cases rely on common estimates on the quantities $A,G,P$. We start with an elementary $L^3 L^3$ estimate away from the boundary.

\begin{claim}\label{c:L3} Suppose $u \in L^{q,*}(-1,0; L^p(\O))$, where $(p,q)$ satisfies $3\le p\le \infty$ and \eqref{e:pq}.  Then $u\in L^3 L^3(\O)$ and $p\in L^{3/2} L^{3/2}(\O_\e)$ for any $\O_\e = \{ x \in \O: \dist{x}{\p \O} > \e\}$.
\end{claim}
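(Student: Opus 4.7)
I would derive both assertions by interpolating the hypothesis $u\in L^{q,*}(-1,0;L^p(\O))$ against the global energy bound $u\in L^\infty(-1,0;L^2(\R^n))$---which holds because $u$ is classical on $[-1,0)$ with $u_0\in H^{\frac{n}{2}+1+\e}(\R^n)$ and therefore satisfies the (conserved or dissipated) energy identity on that interval---and then localize the pressure identity $p=R_iR_j(u_iu_j)$ via a spatial cutoff for the second part.

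For the inclusion $u\in L^3L^3(\O)$, I would apply the pointwise-in-time spatial interpolation
\[
\|u(t)\|_{L^3(\O)} \le \|u(t)\|_{L^p(\O)}^{\theta}\,\|u(t)\|_{L^2(\O)}^{1-\theta},\qquad \theta=\frac{p}{3(p-2)}
\]
(with the obvious modification $\|u\|_{L^3}^3 \le \|u\|_{L^\infty}\|u\|_{L^2}^2$ when $p=\infty$), cube, integrate in $t$, and pull out the uniform $L^2$ factor. What remains is to show $\|u(\cdot)\|_{L^p(\O)}\in L^{3\theta}(-1,0)$ with $3\theta=\frac{p}{p-2}$. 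On a finite-measure interval one has $L^{q,*}\hookrightarrow L^r$ for every $r<q$, so it suffices to verify $3\theta<q$, equivalently $\frac{2}{p}+\frac{1}{q}<1$. Dividing \eqref{e:pq} by $n$ yields
\[
\frac{2}{p}+\frac{1}{q}\le 1-\frac{2}{nq},
\]
which is strict whenever $q<\infty$; the boundary cases $q=\infty$ and $p=\infty$ collapse to direct embeddings on the bounded interval $(-1,0)$ or bounded domain $\O$.

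For the pressure bound, I would fix $\chi\in C_c^\infty(\R^n)$ with $\chi\equiv 1$ on $\O_{\e/2}$ and $\supp\chi \ss \O$, and split
\[
p\ =\ R_iR_j(\chi u_i u_j)\ +\ R_iR_j\bigl((1-\chi)u_iu_j\bigr)\ =:\ p_{\mathrm{loc}}+p_{\mathrm{nloc}}.
\]
Calder\'on--Zygmund boundedness of the Riesz transforms on $L^{3/2}(\R^n)$ gives $\|p_{\mathrm{loc}}(t)\|_{L^{3/2}(\R^n)}\lesssim \|u(t)\|_{L^3(\O)}^2$, which lies in $L^{3/2}(-1,0)$ by the first part. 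For $p_{\mathrm{nloc}}$ restricted to $\O_\e$, a short triangle inequality argument (distinguishing whether the far point lies in $\O$ or in its complement) yields $\dist{\O_\e}{\R^n\setminus\O_{\e/2}}\ge \e/2$, so the Riesz kernel is pointwise bounded by $C\e^{-n}$ on the relevant region and
\[
\sup_{x\in\O_\e}|p_{\mathrm{nloc}}(x,t)|\ \lesssim\ \e^{-n}\|u(t)\|_{L^2(\R^n)}^2,
\]
placing $p_{\mathrm{nloc}}\in L^\infty(-1,0;L^\infty(\O_\e))\hookrightarrow L^{3/2}L^{3/2}(\O_\e)$.

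The main technical subtlety is the strict inequality $3\theta<q$: at the endpoint $\frac{2}{p}+\frac{1}{q}=1$ the standard example $t\mapsto|t|^{-1/q}$ demonstrates that $L^{q,*}\not\hookrightarrow L^{3\theta}$, so the $\frac{2}{nq}$ cushion built into \eqref{e:pq} is exactly what allows the time-integrability step to close. Once this is secured, the pressure decomposition is routine.
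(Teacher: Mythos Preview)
Your proposal is correct and follows essentially the same route as the paper: interpolate $\|u(t)\|_{L^3(\O)}$ between $\|u(t)\|_{L^2(\O)}$ and $\|u(t)\|_{L^p(\O)}$, use the strict inequality $\frac{p}{p-2}<q$ (which the paper also derives from \eqref{e:pq}) to embed $L^{q,*}\hookrightarrow L^{p/(p-2)}$ on the finite time interval, and then split the pressure with a spatial cutoff, applying Calder\'on--Zygmund on the localized piece and the kernel bound $|K_{ij}(x-y)|\lesssim \e^{-n}$ on the far piece. The only cosmetic difference is that the paper first reduces $q$ slightly to land in strong $L^qL^p$ before interpolating, whereas you invoke the weak-to-strong embedding directly; the two are equivalent.
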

\begin{proof} Note that $q > p/(p-2)$, by \eqref{e:pq}. By reducing $q$ we can assume without loss of generality that $u\in L^qL^p$, i.e. strong in $q$, yet $q > p/(p-2)$ still holds. Then finiteness of $\|u\|_{L^3 L^3(\O)}$ follows easily by interpolation and H\"older's inequality:
\[
\int_0^T \|u\|_{L^3(\O)}^3\dt
\le \int_0^T \|u\|_{L^2(\O)}^{\frac{2(p-3)}{p-2}} \|u\|_{L^p(\O)}^{\frac{p}{p-2}}\dt \le \|u\|_{L^\infty L^2(\O)}^{\frac{2(p-3)}{p-2}} \|u\|_{L^q L^p(\O)}^{\frac{p}{p-2}} T^{1 - \frac{p}{q(p-2)}}.
\]

Let $\eta:\R^n\to \R$ be a smooth function such that $\eta\equiv 1$ on $\O_{\e/2}$ and $\supp \eta \subset \O$.  Let $R_i, R_j$ denote the Riesz transforms on $\R^n$, and let
\[
K_{ij}(y) = \frac{n y_i y_j - \d_{ij}|y|^2}{n\w_n|y|^{n+2}}
\]
denote the kernel of $R_i R_j$. (Here $\d_{ij}$ is the Kronecker delta, and $\w_n$ is the volume of the unit ball in $\R^n$.)  Since $p=R_i R_j(u_i u_j)$, we can use the boundedness of the Riesz transforms on $L^{3/2}$ to estimate $\|p\|_{L^{3/2}(\O_\e)}$ as follows:
\begin{align*}
\|p\|_{L^{3/2}(\O_\e)}
& \le \|R_i R_j(\eta u_i u_j)\|_{L^{3/2}(\O_\e)} + \|R_i R_j((1-\eta)u_i u_j) \|_{L^{3/2}(\O_\e)} \\
& \le C\|\eta u_i u_j\|_{L^{3/2}(\R^n)} + \left\| \int_{\R^n} K_{ij}(\cdot-y)(1 - \eta(y)) u_i(y) u_j(y) \dy \right\|_{L^{3/2}(\O_\e)} \\
& \le C\|u\|_{L^3(\O)}^2 + C\left\| \int_{\O_{\e/2}^c} \frac{|u(y)|^2}{|\cdot-y|^n}\dy\right\|_{L^{3/2}(\O_\e)} \\
& \le C\|u\|_{L^3(\O)}^2 + C\e^{-n} \|u\|_{L^2(\O)}^2 |\O_\e|^{2/3}.
\end{align*}
From here it is obvious that taking the $L^{3/2}$ norm in time yields a finite quantity.
\end{proof}

In what follows we assume without loss of generality that $0 \in \O$ and $r < \frac12 \dist{0}{\p \O}$. We start with local energy equality
\begin{equation}
\label{localee}
\int |u(t)|^2 \s(t)\dx = \int |u(s)|^2\s(s)\dx + \int_s^t \int |u|^2 \p_t\s\dx\dtau + \int_s^t \int (|u|^2 + 2p)u\cdot \n \s\dx\dtau,
\end{equation}
valid for any $\s\in C_0^\infty([-1,0]\times \O)$ and $-1\le s\le t<0$. Let $\psi:[0,\infty)\to \R$ be a smooth nonincreasing function such that $\psi(z) = 1$ for $z\le 1$, and $\psi(z) = 0$ for $z\ge 2$.  Define $\phi_r(x,t) = \psi(|x|/r)\psi(|t|/r^\a)$, so that $\phi_r$ is $1$ on $Q_r$ and zero outside $Q_{2r}$.  Putting $\s = \phi_r$ in the local energy equality yields
\begin{equation}
\label{localei}
\sup_{-r^\a\le t\le 0} \int_{B_r} |u(t)|^2\dx
\le \int |u|^2 |\p_t\phi_r|\dx\dtau + \int |u|^3|\n\phi_r|\dx\dtau + 2\int |p-(p)_{r}||u||\n \phi_r| \dx\dtau,
\end{equation}
Note that
\[
|\phi_r|\le \chi_{Q_{2r}},
\quad
|\n \phi_r|\le C r^{-1}\chi_{Q_{2r}},
\quad
|\p_t \phi_r|\le C r^{-\a}\chi_{Q_{2r}}.
\]
Evaluating the above at half the radius $r \to r/2$ and dividing through by $r^{\b}$ yields \footnote{In all intermediate estimates we omit constants $C$ which are independent of the radius.}
\[
\begin{split}
A(r/2) & \le
\frac{1}{r^{\a+\b}} \int_{Q_{r}} |u|^2\dx\dtau  + \frac{1}{r^{1+\b}}\int_{Q_{r}} |u|^3\dx\dtau + \frac{1}{r^{1+\b}}\int_{Q_{r}} |p-(p)_{r}||u| \dx\dtau\\
& \le \frac{r^{2(1+\b)/3}}{r^{\a+\b}}\cdot  (r^{n+\a})^{1/3} \left(\frac{1}{r^{1+\b}} \int_{Q_{r}} |u|^3\dx\dtau\right)^{2/3} + G(r) +  P(r) \\
& \le G(r)^{2/3} + G(r) + P(r) .
\end{split}
\]
We have obtained
\begin{equation}\label{e:AG}
A(r/2)\leq C[ G(r)^{2/3} + G(r) + P(r)].
\end{equation}
Next, we establish a bound on the flux $G(r)$ in terms of $A(r)$.
\begin{align*}
G(r) & = r^{-\b - 1} \int_{-r^\a}^0 \int_{B_{r}} |u(x,t)|^3\dx\dt \\
& \le r^{-\b-1} \int_{-r^\a}^0 \left( \int_{B_{r}} |u(x,t)|^2\dx \right)^{\frac{p-3}{p-2}} \left( \int_{B_{r}} |u(x,t)|^p \dx \right)^{\frac{1}{p-2}}\dt \\
&\le r^{-\frac{\b}{p-2}-1} \int_{-r^\a}^0 \left( \frac{1}{r^{\b}}\int_{B_{r}} |u(x,t)|^2\dx \right)^{\frac{p-3}{p-2}} \left( \int_{B_{r}} |u(x,t)|^p \dx \right)^{\frac{1}{p-2}}\dt \\
&\le r^{-\frac{\b}{p-2}-1} A(r)^{\frac{p-3}{p-2}} \int_{-r^\a}^0 \left( \int_{B_{r}} |u(x,t)|^p \dx \right)^{\frac{1}{p-2}}\dt.
\end{align*}
Denote $f(t) = \| u\|_{L^p}$. Under the time integral we have a quantity bounded by $f^{\frac{p}{p-2}}$. We know, however, that $f\in L^{q,*}$, and that under our assumption \eqref{e:pq} we have $\frac{p}{p-2} < q$. This allows us to extract the same asymptotic behavior in $r$ as if $f$ were in the strong $L^q$-space. Indeed,
\[
\int_{-r^\a}^0 f(t)^\frac{p}{p-2} \dt \le  \frac{p}{p-2} \int_0^\infty \l^{\frac{2}{p-2}} \min \{ | \{ f > \l \} | , r^\a \} \dl.
\]
Using that $| \{ f > \l \} | \leq C / \l^q$ and splitting the integral, we obtain a bound by $r^{\a - \frac{\a p }{q(p-2)}}$. Adding this power of $r$ to the already present power $-\frac{\b}{p-2} - 1$ gives a net power $0$. Thus, we obtain
\begin{equation}\label{e:GA}
G(r) \leq C  A(r)^{\frac{p-3}{p-2}}.
\end{equation}
The case $p<\infty$ has a clear advantage of yielding a power of $A$ smaller than $1$, while the case $p=\infty$ is critical. The latter can be handled in a similar way under the strong $L^q$ in time condition: making the obvious adjustments for $p=\infty$ in the estimates on $G(r)$ above, we obtain the alternative bound 
\begin{equation}\label{e:GA2}
G(r) \leq C \e(r) A(r),
\end{equation}
where 
\begin{equation}
\label{e:epsilon}
\e(r) = \|u\|_{L^q(-r^\a,0;L^\infty)}.
\end{equation}
The small parameter $\e(r)$, which vanishes as $r\to 0$, allows us to compensate for the accrued constant $C$ and close the circle of bounds $A(r) \to G(r) \to P(r) \to A(r/2)$ by induction.  (See below for more details.) 

In order to handle the weak case of $L^{q,*}L^\infty$, we need an explicit power bound in time, and we use a more subtle argument, c.f. Section \ref{s:typeI}.

Turning now to the pressure term, we recall the following local pressure inequality.
\begin{LEMMA}
	\label{l:pressure}
	There exists an absolute constant $c$ such that whenever $p\in L^{3/2}(B_\rho)$ and $-\Delta p = \p_i \p_j(u_i u_j)$ a.e. on $B_\rho$, then for any $r\in (0,\rho/2]$ we have \emph{
		\begin{equation}
		\label{e:locpress}
		\|p - (p)_r\|_{L^{3/2}(B_r)} \le c \|u\|_{L^3(B_{2r})}^2 + cr^{\frac23 n + 1} \int_{2r<|y|<\rho} \frac{|u|^2}{|y|^{n+1}}\dy + c \frac{r^{\frac23 n + 1}}{\rho^{\frac23 n + 1}}\left( \int_{B_\rho} |u|^3 + |p|^{3/2} \dy\right)^{\frac23}.
		\end{equation}}
\end{LEMMA}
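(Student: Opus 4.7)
The plan is to decompose $p$ on $B_\rho$ as $p = p_1 + p_2 + h$, where $p_1$ and $p_2$ capture the ``near'' and ``far'' contributions from the source $\p_i\p_j(u_iu_j)$, and $h$ is a harmonic remainder. Explicitly, I would set
\[
p_1 = R_i R_j(\mathbf{1}_{B_{2r}} u_i u_j), \qquad p_2 = R_i R_j(\mathbf{1}_{B_\rho \setminus B_{2r}} u_i u_j),
\]
and $h = p - p_1 - p_2$. Since $-\Delta R_iR_j f = \p_i\p_j f$ in $\R^n$, the sum $p_1+p_2$ satisfies $-\Delta(p_1+p_2)=\p_i\p_j(u_iu_j)=-\Delta p$ on $B_\rho$, so $h$ is harmonic on $B_\rho$. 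The three terms of the stated bound will come from estimating $\|p_1-(p_1)_r\|_{L^{3/2}(B_r)}$, $\|p_2-(p_2)_r\|_{L^{3/2}(B_r)}$, and $\|h-(h)_r\|_{L^{3/2}(B_r)}$ in turn.

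For the local piece, $L^{3/2}$-boundedness of the Calder\'on--Zygmund operators $R_iR_j$ immediately gives
\[
\|p_1-(p_1)_r\|_{L^{3/2}(B_r)} \le 2\|p_1\|_{L^{3/2}(\R^n)} \le C\|u_iu_j\|_{L^{3/2}(B_{2r})} \le C\|u\|_{L^3(B_{2r})}^2,
\]
which is the first term. For the far piece, I would exploit the explicit kernel $K_{ij}$ given in the excerpt: for $x\in B_r$ and $|y|>2r$ one has $|x-y|\sim|y|$ and $|\n_x K_{ij}(x-y)|\le C|y|^{-(n+1)}$, so the mean value theorem applied to any two points of $B_r$ produces the $L^\infty(B_r)$ oscillation bound
\[
\|p_2-(p_2)_r\|_{L^\infty(B_r)} \le Cr\int_{2r<|y|<\rho}\frac{|u(y)|^2}{|y|^{n+1}}\dy.
\]
Multiplying by $|B_r|^{2/3}\sim r^{2n/3}$ converts this to the $L^{3/2}(B_r)$ bound, yielding the second term.

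For the harmonic piece, I would invoke a standard interior gradient estimate obtained by combining the mean value property with H\"older's inequality:
\[
\|\n h\|_{L^\infty(B_{\rho/2})} \le C\rho^{-1-\frac{2n}{3}}\|h\|_{L^{3/2}(B_\rho)}.
\]
Since $r\le \rho/2$ this gives $\|h-(h)_r\|_{L^\infty(B_r)}\le Cr\rho^{-1-\frac{2n}{3}}\|h\|_{L^{3/2}(B_\rho)}$, and multiplying by $r^{2n/3}$ produces the prefactor $r^{\frac{2n}{3}+1}\rho^{-\frac{2n}{3}-1}$. It then remains only to bound $\|h\|_{L^{3/2}(B_\rho)}$ via the triangle inequality and the same $L^{3/2}$-boundedness of $R_iR_j$ already used for $p_1$:
\[
\|h\|_{L^{3/2}(B_\rho)} \le \|p\|_{L^{3/2}(B_\rho)} + \|p_1\|_{L^{3/2}(\R^n)} + \|p_2\|_{L^{3/2}(\R^n)} \le C\Bigl(\int_{B_\rho}(|u|^3+|p|^{3/2})\dy\Bigr)^{2/3},
\]
where the last inequality uses $a^{2/3}+b^{2/3}\le 2(a+b)^{2/3}$ for $a,b\ge 0$.

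The main obstacle I foresee is arranging the correct $\rho$-scaling in the harmonic piece: a crude pointwise or $L^\infty$-based interior estimate would give the wrong negative power of $\rho$ and spoil the cascade $A\to G\to P\to A$ that is used to close the iteration in the proof of Proposition \ref{p:A}. The $L^{3/2}$-based interior estimate is what ensures that the $r$- and $\rho$-exponents match the desired value $\frac{2n}{3}+1$.
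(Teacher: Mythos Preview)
Your proposal is correct and follows precisely the standard decomposition used to prove this estimate; the paper itself does not give a proof but simply cites Lemma~15.12 of \cite{RRS}, whose argument is essentially the one you have outlined. Your three-piece split $p=p_1+p_2+h$ with Calder\'on--Zygmund boundedness for $p_1$, the kernel oscillation bound for $p_2$, and the $L^{3/2}$-based interior gradient estimate for the harmonic remainder $h$ is exactly the route taken there, so there is nothing to add.
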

This inequality is proven for $n=3$ in in Lemma 15.12 in \cite{RRS}. (Actually, the inequality is stated there with a time integral; \eqref{e:locpress} is obtained as an intermediate step in their proof.) The $n$-dimensional case is adaptable by simply replacing $3$ with $n$ in the appropriate places.  Since we are considering only times prior to the first possible blowup, the hypotheses are valid for the pair $(u(t),p(t))$ in either the Euler or Navier-Stokes case.  Therefore we may use this estimate for either set of equations.  Note that subtracting off the average on $B_{r_k}$ is crucial in order to obtain this local estimate, since $p$ depends nonlocally on $u$. 

Choose $R < \frac12 \dist{0}{\p\O}$ and write $r_j = R / 2^j$ for all $j\in \N$.  Then using the local pressure inequality with $r=r_k$ and $\rho = R/2 = r_1$, we will obtain an estimate on $P(r_k)$. First, we split the integral in the second term into dyadic shells $r_{j+1}\le |y|<r_j$ and estimate $|y|^{-n-1}$ pointwise on each of these shells before replacing the shells with balls.  The result is
\[
\|p - (p)_{r_k}\|_{L^{3/2}(B_{r_k})} \le \|u\|_{L^3(B_{r_{k-1}})}^2 +r_k^{\frac23 n + 1}\sum_{j=1}^{k-2} r_j^{-(n+1)}\|u\|_{L^2(B_{r_{j}})}^2 + \frac{1}{2^{(\frac23 n + 1)k}} g(t),
\]
where $g(t)$ is some function belonging to $L^{3/2}_t$, by Claim~\ref{c:L3}.  We turn this into a bound on $P(r_k)$ as follows:
\[
\begin{split}
P(r_k)
& \le \frac{1}{r_k^{1+\b}}\int_{-r_k^\a}^0 \|u\|_{L^3(B_{r_{k-1}})}^2 \|u\|_{L^3(B_{r_k})}\dt + r_k^{\frac23 n -\b} \sum_{j=1}^{k-2}r_j^{-n-1} \int_{-r_k^\a}^0 \|u\|_{L^2(B_{r_{j}})}^2 \|u\|_{L^3(B_{r_k})}\dt \\
& \quad + \frac{1}{2^{(\frac23 n + 1)k}\ r_k^{1+\b}}\int_{-r_k^\a}^0 g(t) \|u\|_{L^3(B_{r_k})} \dt\\
& \le G(r_{k-1}) +  r_k^{\frac23 n -\b} \sum_{j=1}^{k-2} r_j^{-n-1}  r_{j}^{\frac n3} \int_{-r_k^\a}^0 \|u\|_{L^3(B_{r_k})} \|u\|_{L^3(B_{r_j})}^2 \dt + \frac{1}{2^{(\frac23 n + 1)k}\ r_k^{2(1+\b)/3}}G(r_k)^{1/3} \\
& \le G(r_{k-1})  + r_k^{\frac23 n -\b} \sum_{j=1}^{k-2} r_j^{-n-1}  r_{j}^{\frac n3} r_j^{\b +1} G(r_k)^{1/3} G(r_j)^{2/3} + \frac{1}{2^{(\frac23 n + \frac13 - \frac23 \b)k}\ R^{2(1+\b)/3}} G(r_k)^{1/3}
%\\
%\intertext{(using that the powers of $r_j$ in the sum add up to $\b - \frac23 n$, and $\frac23 n + \frac13 - \frac23 \b >0$)}
%& \leq  \max\{ G(r_1),\ldots, G(r_k) \} +  R^{- 2(1+\b)/3} G(r_k)^{1/3}.
\end{split}
\]
Using the fact that the powers of $r_j$ in the sum add up to $\b$, and the fact that $\frac23 n + \frac13 - \frac23 \b>0$,  We have obtained the following:
\begin{equation}\label{e:PG}
P(r_k) \leq  C \max\{ G(r_1),\ldots, G(r_k) \} + C R^{- 2(1+\b)/3} G(r_k)^{1/3},
\end{equation}
with $C$ independent of $k$ and $R$ in the range $R < \frac12 \dist{0}{\p\O}$.

\subsection{Case $u\in L^{q,*}L^p$, $3\leq p<\infty$}
Let us fix an arbitrary initial radius $R < \frac12 \dist{0}{\p\O}$, and set a constant $A>1$ to be determined later but so that
\[
A(R) < A.
\]
This sets the initial step in the induction on $k=0,1,\ldots$. Suppose we have
\[
A(r_j) <A,
\]
for all $j \leq k$. By \eqref{e:GA} we have
\[
G(r_j) \leq C_1 A^{1 -\d},\quad \d = \frac{1}{p-2},
\]
for all $j \leq k$. In view of \eqref{e:PG},
\[
P(r_k) \leq C_2  A^{1 -\d},
\]
where $C_2$ depends on the (fixed) constant $R$.  (Note that we used that $A>1$ to bound of $A^{(1-\d)/3}$ by $A^{1-\d}$.) Returning to \eqref{e:AG}, we obtain
\[
A(r_{k+1}) \leq C_3 A^{1-\d},
\]
where still $C_3$ depends only on $R$. By setting $A > \max\{1, C_3^{1/\d}\}$ initially, we have achieved the bound
\[
A(r_{k+1}) < A,
\]
which finishes the induction.

\subsection{Case $u\in L^{q}L^\infty$}  Let us fix $R < \frac12 \dist{0}{\p\O}$, $R<1$, so that $\e(R)<1$. (Recall that $\e(r)$ is defined by \eqref{e:epsilon}.) Let $E$ denote the total energy $\|u\|_{L^2}^2$, which is independent of time on the interval $[-1,0)$. We aim to show that the bound
\begin{equation}
\label{e:qinftygoal}
A(r) < R^{-\b} E + R^{-1-\b} := A
\end{equation}
propagates through scales for initial $R$ sufficiently small. Clearly it holds for $r_0 = R$. Suppose we have
\[
A(r_j) < A
\]
for all $j \le k$. Denote $\e = \e(R)$ for convenience. Since $\e(r) \leq \e$ for $r\le R$, the bound \eqref{e:GA2} gives us
\[
G(r_j) < C_1 \e A
\]
for all $j \le k$ as well. The pressure bound \eqref{e:PG} yields
\[
P(r_k) < C_2 \e A + C_3 \e^{1/3} A^{1/3} R^{- 2(1+\b)/3}.
\]
However, $R^{- 2(1+\b)/3} < A^{2/3}$, by \eqref{e:qinftygoal}. So,
\[
P(r_k)< C_4 \e^{1/3} A.
\]
Returning to \eqref{e:AG} again, we find that
\[
A(r_{k+1}) < C_5 ( \e^{2/3} A^{2/3} + \e A + \e^{1/3} A) \leq C_6 \e^{1/3} A,
\]
where $C_6$ is independent of $R$. Picking $R$ so that $C_6 \e^{1/3}(R) <1$ finishes the induction.

\subsection{Case $\|u(t)\|_{L^\infty} \le c_0 |t|^{-1/q}$.}
\label{s:typeI}
Assume $f(t):=\|u(t,\cdot)\|_{L^\infty} \le \frac{c_0}{|t|^{1/q}}$, where $\frac{n+2}{n} \le q$. In this case we disregard the subdomain $\O$ and work on the full space only.  Let $\psi:[0,\infty)\to \R$ be a standard bump function---equal to 1 on $\{|x|\le 1/2\}$ and supported inside $\{|x|<1\}$.  Denote $\phi_r(x) = \psi(|x|/r)$ and define
\[
E(t,r)=\int |u(x,t)|^2 \phi_r(x)\dx,
\quad
\quad
E_k(t,r) = \frac{E(t,2^k r)}{2^{kn}}.
\]
Note that by definition of $E(t,r)$ we have 
\[
\|u(t)\|_{L^2(B_r)}^2 \le E(t,2r) \le \|u(t)\|_{L^2(B_{2r})}^2.
\]
We have the following Lemma:
\begin{LEMMA}
	\label{l:step1}
	There exists a constant $C_0 = C_0(u,n,q)$ such that for any $s<t<0$ and $r>0$, we have
	\emph{\begin{equation}
		\label{e:step1}
		E(t,r)\le r^n f(s)^2 + \frac{C_0}{r}\int_s^t f(\tau) \sum_{j\in \N} 2^{-j} E_j(\tau, r) \dtau.
		\end{equation}}
\end{LEMMA}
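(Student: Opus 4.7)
The plan is to begin with the local energy equality applied to the time-independent cut-off $\sigma(x,\tau) = \phi_r(x)$. Since $\partial_t \phi_r = 0$ and we are in the Euler case ($\nu = 0$), the equality reduces to
\[
E(t,r) = E(s,r) + \int_s^t \int (|u|^2 + 2p)\, u \cdot \nabla \phi_r \dx\dtau.
\]
The initial term is estimated trivially by $E(s,r) \le \|u(s)\|_{L^\infty}^2 \int \phi_r \dx \lesssim r^n f(s)^2$, which accounts for the first summand on the right-hand side of \eqref{e:step1}.

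For the convection term, I would use $\|u(\tau)\|_{L^\infty} = f(\tau)$ together with the fact that $|\nabla \phi_r|$ is supported on the shell $\{r/2 \le |x| \le r\}$ with $|\nabla \phi_r| \lesssim 1/r$. This gives $\int |u|^3 |\nabla \phi_r| \dx \le (Cf(\tau)/r) E_0(\tau,r)$, which is the $j=0$ contribution to the target sum.

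The main difficulty is the pressure. The key observation is that, since $\diver u = 0$, we have $\int u \cdot \nabla \phi_r \dx = 0$, so an arbitrary constant may be subtracted from $p$ in the pressure integral. I would split $p = p_1 + p_2$, where $p_1 = R_i R_j(\chi u_i u_j)$ for a smooth cut-off $\chi$ equal to $1$ on $B_{2r}$ and supported in $B_{4r}$, and $p_2 = R_i R_j((1-\chi) u_i u_j)$. The local part $p_1$ is controlled by the $L^{3/2}$ boundedness of the Riesz transforms, $\|p_1\|_{L^{3/2}} \lesssim \|u\|_{L^3(B_{4r})}^2$; combining with H\"older and the elementary interpolation $\|u\|_{L^3(B_R)}^3 \le f(\tau) \|u\|_{L^2(B_R)}^2$ reduces the estimate to controlling $E(\tau, 8r)$ by $\sum_k 2^{-k} E_k(\tau,r)$, which is immediate since $E_3 \le 8\sum_k 2^{-k} E_k$.

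For the far part $p_2$, I would replace $p_2$ by $p_2 - p_2(0)$, and use the fact that the convolution kernel $K$ of $R_iR_j$ is homogeneous of degree $-n$ and smooth away from the origin. For $x \in B_r$ and $|y| > 2r$, the mean value theorem yields $|K(x-y) - K(-y)| \lesssim |x|/|y|^{n+1} \le r/|y|^{n+1}$, so
\[
|p_2(x) - p_2(0)| \lesssim r \int_{|y|>2r} \frac{|u(\tau,y)|^2}{|y|^{n+1}} \dy.
\]
Decomposing into dyadic shells $\{2^j r \le |y| < 2^{j+1} r\}$, bounding each shell by $E(\tau, 2^{j+2} r)/(2^j r)^{n+1}$, and using the identity $E(\tau, 2^k r) = 2^{kn} E_k(\tau, r)$, the geometric weight $2^{-k}$ emerges after reindexing $k = j + 2$. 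Multiplying by $f(\tau) \|\nabla \phi_r\|_{L^1} \lesssim f(\tau) r^{n-1}$ then yields the claimed bound $(Cf(\tau)/r) \sum_k 2^{-k} E_k(\tau, r)$. Summing the convection and two pressure contributions and integrating in $\tau$ completes the proof. The main technical obstacle is the precise tracking of dyadic scales to produce exactly the $2^{-j}$ weights; this is what forces the Taylor-expansion trick in the far pressure piece, rather than the cruder pointwise bound $|K| \lesssim 1/|y|^n$ which would give only $\sum_k 2^0 E_k$ and diverge.
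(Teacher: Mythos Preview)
Your approach is essentially the same as the paper's: the paper also starts from the local energy equality with the time-independent $\phi_r$, bounds the initial term by $r^n f(s)^2$, and handles the pressure via a near/far splitting (it cites the local pressure inequality with $\rho\to\infty$, which is exactly your $p_1+p_2$ decomposition done as a black-box lemma), then decomposes the far-field integral into dyadic shells to produce the $2^{-j}E_j$ weights. One small indexing slip: since $\phi_r\equiv 1$ only on $B_{r/2}$, the convection term is controlled by $E(\tau,2r)=2^n E_1(\tau,r)$ rather than $E_0(\tau,r)$, but this is absorbed into the sum just as your $E_3$ term is.
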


Before giving the proof, let us make a few remarks.  Fix $r>0$ and then set
\begin{equation}
t_0 = -r^{q'}, \quad  C_1= C_0 c_0 q',
\end{equation}
where $q'$ is the H\"older conjugate of $q$.  Iteration of the Lemma will eventually allow us to prove the bound
\begin{equation}
\label{e:iterM}
E(t,r) \le r^n f(t_0)^2 e^{C_1} + \frac{C_1^M}{M!} \|u\|_{L^\infty L^2}^2,
\end{equation}
valid for $t\in (t_0, 0)$ and all $M\in \N$.  The constant $C_1$ is chosen so that
\begin{equation}
\label{e:t0choice}
\frac{C_0}{r}\int_{t_0}^0 f(\tau)\dtau \le C_0 c_0 q' t_0^{1/q'} r^{-1} = C_1.
\end{equation}
Taking $M\to \infty$ in \eqref{e:iterM}, we obtain
\begin{equation}
\label{e:itMinfty}
E(t,r) \le C r^n f(t_0)^2 \le C r^{n-\frac{2}{q-1}},\quad t\in (-r^{q'}, 0),
\end{equation}
where the second inequality follows from the assumed bound on $f$ and the definition of $t_0$.  This is almost the desired bound in Proposition \ref{p:typeI}, except that the time interval does not extend to $-1$ in the negative direction.  However, the bound $E(t,r)\le Cr^{n - \frac{2}{q-1}}$ follows automatically from the assumption $\|u(t)\|_{L^\infty}\le C|t|^{-1/q}$ when $t\in [-1,r^{q'}]$.  Therefore in order to prove Proposition \ref{p:typeI}, it suffices to prove the Lemma (and the fact that the bound \eqref{e:iterM} follows).

\begin{proof}[Proof of Lemma \ref{l:step1}]
Without loss of generality we assume $x_0=0$.  We use our time-independent test function $\phi_r$ in the local energy equality \eqref{localee}, dropping the subscript for convenience:
\begin{equation}
\int |u(x,t)|^2 \phi(x)\dx = \int |u(x, s)|^2\phi(x)\dx + \int_{s}^t \int (|u|^2 + 2p)u\cdot \n \phi\dx\dtau.
\end{equation}
Applying the obvious pointwise bounds, we get
\begin{align*}
E(t,r) & \le E(s, r) + \frac{C}{r}\int_{s}^t \int_{B_{r}} |u|^3 + |p-(p)_{r}||u|\dx\dtau \\
& \le E(s, r) + \frac{C}{r}\int_{s}^t \|u\|_{L^3(B_r)}^3  + \|p-(p)_r\|_{L^{3/2}(B_r)}\|u\|_{L^3(B_r)}\dtau.
\end{align*}
	
Take $\rho\to \infty$ in the local pressure inequality \eqref{e:locpress}; the last term tends to zero because $u\in L^3(-1,0; L^3(\R^n))$.  
\begin{equation}
\|p - (p)_r\|_{L^{3/2}(B_r)}
\le c \|u\|_{L^3(B_{2r})}^2 + cr^{\frac{2n}{3} + 1} \int_{2r<|y|<\infty} \frac{|u|^2}{|y|^{n+1}}\dy.
\end{equation}
As before, we split the remaining integral into dyadic shells, estimate $|y|^{-n-1}$ on each shell, and then replace the shells with balls.  We obtain
\begin{align*}
r^{\frac{2n}{3} + 1} \int_{2r<|y|<\infty} \frac{|u|^2}{|y|^{n+1}}\dy
& \le r^{\frac{2n}{3} + 1} \sum_{j=1}^\infty (2^j r)^{-n-1} \int_{B_{2^{j+1}r}} |u|^2\dy
\le Cr^{-\frac n3} \sum_{j=3}^\infty 2^{-j} E_j(t,r).
\end{align*}
So
\begin{align*}
\|p - (p)_r\|_{L^{3/2}(B_r)}\|u\|_{L^3(B_r)}
& \le c \|u\|_{L^3(B_{2r})}^3 + Cr^{-\frac n3} \|u\|_{L^3(B_r)} \sum_{j=3}^\infty 2^{-j} E_j(t,r).
\end{align*}
Next,
\[
\|u(\tau)\|_{L^3(B_r)} \le C f(\tau)^{\frac13}E(\tau, 2r)^{\frac13}\le C f(\tau)r^{\frac n3}.
\]
Therefore
\begin{align*}
\int_{s}^t \|p-(p)_r\|_{L^{3/2}(B_{r})}\|u\|_{L^3(B_r)}\dtau
& \le C\int_{s}^t f(\tau)E(\tau,4r) + f(\tau) \sum_{j=3}^\infty 2^{-j} E_j(\tau,r)\dtau \\
& \le C\int_{s}^t f(\tau) \sum_{j=2}^\infty 2^{-j} E_j(\tau,r)\dtau,
\end{align*}
and consequently,
\begin{align*}
E(t,r) & \le E(s, r) + \frac{C}{r}\int_{s}^t \|u\|_{L^3(B_r)}^3  + \|p-(p)_r\|_{L^{3/2}(B_r)}\|u\|_{L^3(B_r)}\dtau \\
& \le r^n f(s)^2 + \frac {C_0}{r}\int_{s}^t f(\tau) \sum_{j=1}^\infty 2^{-j} E_j(\tau,r) \dtau,
\end{align*}
where $C_0$ is defined so that the last inequality holds.
\end{proof}

The final missing step in the proof of Proposition \ref{p:typeI} consists of showing that iteration of \eqref{e:step1} gives \eqref{e:iterM}. We argue as follows:

\begin{proof}[Proof of \eqref{e:iterM}]
Notice that the quantities $E_k(t,r)$ possess the following scaling property:
\[
\frac{E_j(t,2^k r)}{2^{kn}} = E_{j+k}(t,r),
\quad
j,k\in \N\cup\{0\}.
\]
We can therefore rescale the bound \eqref{e:step1} as follows:
	
\begin{align*}
E_k(t,r) & = \frac{E(t,2^k r)}{2^{kn}} \le r^n f(s)^2 + \frac{C_0}{r} \int_s^t f(\tau) \sum_{j=1}^\infty 2^{-j-k} \cdot \frac{E_j(\tau, 2^k r))}{2^{kn}}\dtau \\
& \le r^n f(s)^2 + \frac{C_0}{r} \int_s^t f(\tau) \sum_{j=k+1}^\infty 2^{-j} E_j(\tau,r)\dtau,
\end{align*}
We don't actually use the fact that this last sum starts from $j=k+1$; for our purposes it suffices to use a rougher bound, where we trivially replace the sum above with a sum over all of $\N$:
\begin{equation}
\label{e:Ekrescale}
E_k(t,r)\le r^n f(s)^2 + \frac{C_0}{r} \int_s^t f(\tau) \sum_{j\in \N}^\infty 2^{-j} E_j(\tau,r)\dtau.
\end{equation}
	
Next, we iterate to obtain \eqref{e:iterM}.  By \eqref{e:Ekrescale}, we immediately have
\begin{equation}
\label{e:Et}
E(t,r) = E_0(t,r) \le r^n f(t_0)^2 + \frac{C_0}{r}\int_{t_0}^t f(t_1) \sum_{k_1\in \N} 2^{-k_1} E_{k_1}(t_1,r) \dt_1,
\end{equation}
\begin{equation}
\label{e:Et1}
E_{k_1}(t_1,r) \le r^n  f(t_0)^2 + \frac{C_0}{r} \int_{t_0}^{t_1} f(t_2) \sum_{k_2\in \N} 2^{-k_2} E_{k_2}(t_2,r)\dt_2.
\end{equation}
Substituting \eqref{e:Et1} into \eqref{e:Et}, we get
\begin{align*}
E(t,r)
& \le r^n f(t_0)^2\left[ 1  + \frac{C_0}{r}\int_{t_0}^t f(t_1)  \dt_1\right]  + \frac{C_0}{r}\int_{t_0}^t f(t_1) \sum_{k_1\in \N} 2^{-k_1} \frac{C_0}{r} \int_{t_0}^{t_1} f(t_2) \sum_{k_2\in \N} 2^{-k_2} E_{k_2}(t_2, r)\dt_2\dt_1 \\
& \le r^n f(t_0)^2\left[ 1 + C_1 \right] + \frac{C_0^2}{r^2} \int_{t_0}^t f(t_1) \int_{t_0}^{t_1} f(t_2) \sum_{k_1,k_2\in \N} 2^{-(k_1 + k_2)} E_{k_2}(t_2, r)\dt_2\dt_1,
\end{align*}
where we have applied \eqref{e:t0choice} in order to reach the last inequality. This completes our second iteration.  We claim that at the $M$th step, we will have the bound
\begin{multline}
\label{e:stepM}
E(t,r) \le r^n f(t_0)^2 \sum_{j=0}^{M-1} \frac{C_1^j}{j!} + \frac{C_0^M}{r^M} \int_{t_0}^t f(t_1) \cdots \int_{t_0}^{t_{M-1}} f(t_M) \sum_{k_1,\ldots,k_M\in \N} \frac{E_{k_M}(t_M, r)}{2^{k_1 + \cdots + k_M}} \dt_M\cdots\dt_1.
\end{multline}
We have shown this is true for $M=1,2$.  Now we induct, using this bound to derive step $M+1$, which we will see has the same form.  Indeed, Lemma \ref{l:step1} gives us
\begin{equation}
\label{e:EtM}
E_{k_M}(t_M, r) \le r^n f(t_0)^2 + \frac{C_0}{r}\int_{t_0}^{t_M} f(t_{M+1}) \sum_{k_{M+1}\in \N} 2^{-k_{M+1}} E_{k_{M+1}}(t_{M+1},r)\dt_{M+1};
\end{equation}
substituting this into our inductive hypothesis, we get
\begin{align*}
E(t,r) & \le r^n f(t_0)^2 \left[  \sum_{j=0}^{M-1} \frac{C_1^j}{j!} + \frac{C_0^M}{r^M} \int_{t_0}^t f(t_1) \cdots \int_{t_0}^{t_{M-1}} f(t_M) \dt_M\cdots\dt_1 \right] \\
& \hspace{10 mm} + \frac{C_0^{M+1}}{r^{M+1}} \int_{t_0}^t f(t_1) \cdots \int_{t_0}^{t_M} f(t_{M+1}) \sum_{k_1,\ldots,k_{M+1}\in \N} \frac{E_{k_{M+1}}(t_{M+1},r)}{2^{k_1 + \cdots + k_{M+1}}} \dt_{M+1}\cdots\dt_1
\end{align*}
Since
\begin{equation}
\label{e:nestedint}
\frac{C_0^M}{r^M} \int_{t_0}^t f(t_1) \cdots \int_{t_0}^{t_{M-1}} f(t_M) \dt_M\cdots\dt_1
= \frac{1}{M!}\left[ \frac{C_0}{r} \int_{t_0}^t f(\tau)\dtau \right]^M
\le \frac{C_1^M}{M!},
\end{equation}
we have now proved \eqref{e:stepM}.  Having established \eqref{e:stepM}, we can prove \eqref{e:iterM} quickly. First, we clearly have
\[
\sum_{j=0}^{M-1} \frac{C_1^{j}}{j!}<e^{C_1}.
\]
To deal with the other term in \eqref{e:stepM}, we estimate each $E_{k_M}(t_M, r)$ trivially by $\|u\|_{L^\infty L^2}$ (so the entire sum can be bounded by $\|u\|_{L^\infty L^2}$).  Then we use \eqref{e:nestedint} to take care of the nested integrals.
	
Altogether we have
\begin{equation*}
E(t,r) \le r^n f(t_0)^2 e^{C_1} + \frac{C_1^M}{M!}\|u\|_{L^\infty L^2}^2,
\end{equation*}
which is \eqref{e:iterM}.
\end{proof}

\begin{corollary} Under the assumptions of either of Propositions \ref{p:A} or \ref{p:typeI}, we have the bound $d(x,\cE) \geq \b$ for all $x\in \O$.  Furthermore, the $\b$-density of $\cE$ is uniformly bounded on $\O$.  
\end{corollary}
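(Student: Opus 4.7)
The plan is to directly transfer the uniform local $L^2$ bound furnished by \prop{p:A} (or \prop{p:typeI}) into a pointwise bound of the form $\cE(B_r(x)) \lesssim r^\b$, from which both stated conclusions follow by unwinding definitions. The bridge is the weak-$*$ defining property of $\cE$: for any $\sigma \in C_0(\O)$,
\[
\int \sigma \, \dE = \lim_{t \to 0^-} \int \sigma |u(t)|^2 \dx.
\]

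First I would fix $x \in K \ss \O$ (with $K$ compact) and take $R$ as in \prop{p:A}, shrinking it so that also $4R < \dist{x}{\p\O}$ for every $x \in K$. For $r \in (0, R/2)$, choose a cutoff $\sigma \in C_0^\infty(\O)$ satisfying $\chi_{B_r(x)} \le \sigma \le \chi_{B_{2r}(x)}$. By weak-$*$ convergence and the bound \eqref{e:Apq} applied at scale $2r$, valid for $t$ sufficiently close to $0$ (namely $t \in (-(2r)^\a, 0)$),
\[
\cE(B_r(x)) \le \int \sigma \, \dE = \lim_{t \to 0^-} \int \sigma |u(t)|^2 \dx \le \limsup_{t \to 0^-} \int_{B_{2r}(x)} |u(t)|^2 \dx \le C_0 (2r)^\b.
\]
The argument under the hypotheses of \prop{p:typeI} is identical, with \eqref{e:typeI} in place of \eqref{e:Apq}; since that estimate holds for all $x \in \R^n$ and on the full interval $(-1,0)$, the bound $\cE(B_r(x)) \le C_0 (2r)^\b$ then holds on all of $\R^n$.

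Both conclusions now follow mechanically. For the local dimension, using that $\ln r < 0$ for small $r$ (so the inequality flips when dividing),
\[
d(x,\cE) = \liminf_{r \to 0} \frac{\ln \cE(B_r(x))}{\ln r} \ge \liminf_{r \to 0} \frac{\ln(2^\b C_0) + \b \ln r}{\ln r} = \b.
\]
For the $\b$-density, directly from the definition \eqref{e:sdensity},
\[
\Theta^{*\b}(\cE, x) = \limsup_{r \to 0} (2r)^{-\b} \cE(B_r(x)) \le C_0,
\]
uniformly in $x \in K$ (or uniformly in $x \in \R^n$ in the \prop{p:typeI} setting). I do not foresee any genuine obstacle: the only mild subtlety is that the open ball $B_r(x)$ is not \emph{a priori} a continuity set for the weak-$*$ convergence, which is why we enlarge to $B_{2r}(x)$ via $\sigma$; this merely costs a benign factor of $2^\b$ in the final constant.
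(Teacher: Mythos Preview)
Your proof is correct and follows exactly the approach the paper intends: the paper does not give a separate proof of this corollary, treating it as immediate from the definitions of $d(x,\cE)$ and $\Theta^{*\b}(\cE,x)$ together with the bounds in Propositions~\ref{p:A} and~\ref{p:typeI} (see the discussion in Section~\ref{s:ud}). You have simply made explicit the passage from the weak-$*$ definition of $\cE$ to the ball estimate $\cE(B_r(x))\le C_0(2r)^\b$, including the harmless enlargement $B_r\to B_{2r}$ via a cutoff, which is the only detail worth writing down.
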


\section{Applications to NSE}
\label{s:NSE}
If we consider the 3-dimensional Navier-Stokes equations instead of the $n$-dimensional Euler equations, we can reach conclusions in the same spirit as those above.  We describe the necessary modifications below.  Adding
\begin{equation}
\label{NSEterms}
-2\nu\int |\n u|^2 \s\dx\dtau + \nu\int |u|^2 \Delta \s\dx\dtau
\end{equation}
to the right side of \eqref{localee}, we obtain the energy equality for the NSE.  However, the first of these terms can be dropped without affecting the inequality.  The way we deal with the second term depends on the method and test function used for the Euler case; these depend in turn on the assumptions made on $u$.

We consider together the cases $u\in L^{q,*} L^p$ or $u\in L^q L^\infty$ (subject of course to the restrictions on $p$ and $q$ described above). We take $\s=\phi_r$ as constructed above when considering these cases. The second term of \eqref{NSEterms} can clearly be bounded above by
\[
\frac{C}{r^2} \int_{Q_{2r}} |u|^2 \dx\dtau.
\]
We claim that we can ignore this term as well.  Indeed, $\b > 2$ whenever
\[
\frac{3}{p} + \frac{2}{q} > 1.
\]
The negation of this inequality is precisely the Prodi-Serrin condition.  Therefore we may assume without loss of generality that $\b >2$, so that
\[
\frac{C}{r^2} \int_{Q_{2r}} |u|^2 \dx\dtau\le \frac{C}{r^\b} \int_{Q_{2r}} |u|^2 \dx\dtau.
\]
The right side of this inequality is the same quantity we use to bound the term $\int |u|^2 |\p_t \phi_r|\dx\dtau$ that appears in \eqref{localei}; therefore it is clear that the addition of the viscous term can cause no trouble.  That is, \eqref{e:Apq} holds whenever the Prodi-Serrin condition fails, while it is obsolete whenever the Prodi-Serrin condition holds.

\begin{REMARK}
We mention one other extension of Proposition \ref{p:A} before moving on, which is applicable only to the NSE.  We can obtain a condition similar to \eqref{e:Apq} under the assumption $u\in L^q L^p$ for some pairs $(p,q)$ with $p<3$, simply by interpolation with the enstrophy space $L^2 H^1$. In particular, this is possible when
\[
\frac9p + \frac5q \le 4, \quad 2<p<3.
\]
If $(p,q)$ satisfies this condition and $u\in L^q L^p$, then $u$ also belongs to $L^a L^3$, where
\[
a = \frac{\frac6p - 1}{ \frac3p + \frac1q - 1}
\]
We can apply Proposition \ref{p:A} to $u\in L^a L^3$, yielding
\[
\sup_{-r^{\widetilde{\a}} < t < 0, \,x_0 \in K} \int_{B_r(x_0)} |u(x,t)|^2\,dx \le C_0\,r^{\widetilde{\b}},
\]
where
\[
\widetilde{\a} = \frac{2(\frac6p - 1)}{\frac3p - \frac1q},
\quad
\widetilde{\b} = \frac{ 4 - \frac9p - \frac5q}{\frac3p - \frac1q}.
\]
\end{REMARK}

When $\|u(t)\|_{L^\infty} \le c_0 |t|^{-1/q}$ (and we use the corresponding time-independent test function $\s = \phi_r=\phi$ from Section \ref{s:typeI}), we may estimate the second term of \eqref{NSEterms} as follows:
\begin{align*}
\int_{t_0}^t \int |u|^2 \Delta \phi\dx\dtau
& \le \left[ \int_{t_0}^t \int_{B_r} |u|^3 \dx\dtau \right]^{\frac23} \cdot \frac{C}{r^2} \cdot [|t_0|r^3]^{\frac13} \\
& \le \left( \frac{|t_0|}{r} \right)^{\frac13} \left[ \frac{C}{r} \int_{t_0}^t f(\tau) E(\tau, 2r) \dtau \right]^{\frac23}\\
& \le C|t_0|r^{-1} + \frac{C}{r} \int_{t_0}^t f(\tau) E(\tau, 2r) \dtau.
\end{align*}
The second term can be absorbed into a term already existing in our energy estimates.  We claim that running the first term through the iteration scheme yields a quantity which can be bounded above by $C|t_0|r^{-1} e^{C_1}$, which is of the same order as $r^{q'-1}$.  Note that this is at least the required order $r^{3-\frac{2q'}{q}}$ whenever $q\le 2$, therefore Proposition \ref{p:typeI} holds for the Navier-Stokes equations when $n=3$ and $5/3\le q\le 2$.  On the other hand, the conclusion is trivial whenever $q>2$, by the Prodi-Serrin criterion.

We now sketch the argument needed to substantiate our claim regarding the term $C|t_0|r^{-1}$.  By making straightforward adjustments to the proof of Lemma \ref{l:step1}, we can write
\begin{equation}
E(t,r)\le r^3 f(s)^2 + \frac{C|s|}{r}+ \frac{C_0}{r}\int_s^t f(\tau) \sum_{j\in \N} 2^{-j} E_j(\tau, r) \dtau,
\quad \quad
-1\le s\le t <0,
\end{equation}
together with its rescaled version
\begin{equation}
E_k(t,r)\le r^3 f(s)^2 + \frac{C|s|}{2^{k(n+1)} r}+ \frac{C_0}{r}\int_s^t f(\tau) \sum_{j\in \N} 2^{-j} E_j(\tau, r) \dtau,
\quad \quad
-1\le s\le t <0,
\end{equation}
the analog of \eqref{e:Ekrescale}. Setting $s=t_0$, we have the analogs of \eqref{e:Et} and \eqref{e:Et1}:
\begin{equation}
E(t,r) = E_0(t,r) \le r^3 f(t_0)^2 + \frac{C|t_0|}{r} + \frac{C_0}{r}\int_{t_0}^t f(t_1) \sum_{k_1\in \N} 2^{-k_1} E_{k_1}(t_1,r) \dt_1,
\end{equation}
\begin{equation}
E_{k_1}(t_1,r) \le r^3  f(t_0)^2 + \frac{C|t_0|}{r} + \frac{C_0}{r} \int_{t_0}^{t_1} f(t_2) \sum_{k_2\in \N} 2^{-k_2} E_{k_2}(t_2,r)\dt_2.
\end{equation}
So our second iterative step becomes
\begin{align*}
E(t,r)
%& \le \left[ r^n f(t_0)^2+ \frac{C|t_0|}{r}\right]\left[ 1  + \frac{C_0}{r}\int_{t_0}^t f(t_1)  \,dt_1\right]  + \frac{C_0}{r}\int_{t_0}^t f(t_1) \sum_{k_1\in \N} 2^{-k_1} \frac{C_0}{r} \int_{t_0}^{t_1} f(t_2) \sum_{k_2\in \N} 2^{-k_2} E_{k_2}(t_2, r)\,dt_2\,dt_1 \\
& \le \left[ r^3 f(t_0)^2+ \frac{C|t_0|}{r}\right]\left[ 1 + C_1 \right] + \frac{C_0^2}{r^2} \int_{t_0}^t f(t_1) \int_{t_0}^{t_1} f(t_2) \sum_{k_1,k_2\in \N} 2^{-(k_1 + k_2)} E_{k_2}(t_2, r)\dt_2\dt_1,
\end{align*}
whereas our $M$th step yields
\begin{align*}
E(t,r) & \le \left[ r^3 f(t_0)^2 + \frac{C|t_0|}{r} \right] \sum_{j=0}^{M-1} \frac{C_1^j}{j!} \\
& \hspace{10 mm}+ \frac{C_0^M}{r^M} \int_{t_0}^t f(t_1) \cdots \int_{t_0}^{t_{M-1}} f(t_M) \sum_{k_1,\ldots,k_M\in \N} \frac{E_{k_M}(t_M, r)}{2^{k_1 + \cdots + k_M}} \dt_M\cdots\dt_1.
\end{align*}
Bounding the two sums and the nested integrals as before, then taking $M\to \infty$, we obtain the bound
\[
E(t,r) \le [r^3 f(t_0)^2 + C|t_0|r^{-1}]e^{C_1}\le Cr^{3 - \frac{2}{q-1}},
\]
justifying our claim.  We pause to record this as a Proposition:
\begin{proposition}
Propositions \ref{p:A} and \ref{p:typeI} remain valid for solutions of 3D Navier-Stokes equation, where $0$ is the first time of blowup.
\end{proposition}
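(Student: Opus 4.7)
The plan is to revisit each step in the proofs of Propositions~\ref{p:A} and \ref{p:typeI} where the local energy equality \eqref{localee} is used, and account for the two additional viscous contributions $-2\nu\int|\nabla u|^2\sigma\,dx\,d\tau$ and $+\nu\int|u|^2\Delta \sigma\,dx\,d\tau$ that must be inserted on the right-hand side in the NSE case. The dissipation term has the favorable sign and drops harmlessly when passing to upper bounds; the entire task is therefore to absorb the Laplacian term into the existing estimates.

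For the cases $u\in L^{q,*}L^p$ with $3\le p<\infty$ and $u\in L^q L^\infty$, I will use the same time-dependent test function $\phi_r$ as in the Euler proof. The pointwise bound $|\Delta\phi_r|\le Cr^{-2}\chi_{Q_{2r}}$ produces an extra contribution of order $\frac{C}{r^{2+\beta}}\int_{Q_{2r}}|u|^2\,dx\,d\tau$ in the estimate for $A(r/2)$, to be compared with the existing time-derivative contribution $\frac{C}{r^{\alpha+\beta}}\int_{Q_{2r}}|u|^2\,dx\,d\tau$. A direct calculation shows that $\alpha\ge 2$ is equivalent to $\tfrac{3}{p}+\tfrac{2}{q}\ge 1$; the negation is the Prodi-Serrin regularity criterion, which if satisfied yields energy equality classically. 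Hence we may assume $\alpha\ge 2$, and then for $r$ small the viscous term is dominated by the time-derivative term. The iteration chain $A(r)\to G(r)\to P(r)\to A(r/2)$ then proceeds identically to the Euler case, with at most an enlarged multiplicative constant.

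For the power-law case of Proposition~\ref{p:typeI}, the test function $\phi_r$ is time-independent, so $\int|u|^2\Delta\phi$ cannot be compared to a time-derivative quantity. Instead, I would apply Hölder's inequality in spacetime with $|B_r|\sim r^3$, combined with Young's inequality and the bound $\|u\|_{L^3(B_r)}\le Cf(\tau)^{1/3}E(\tau,2r)^{1/3}$ (which already appears in the pressure analysis), to obtain
\begin{equation*}
\nu\int_{t_0}^{t}\!\int |u|^2 \Delta\phi \,dx\,d\tau \le \frac{C|t_0|}{r} + \frac{C}{r}\int_{t_0}^{t} f(\tau) E(\tau, 2r)\,d\tau.
\end{equation*}
The second term is absorbed into the $E_j$-sum in Lemma~\ref{l:step1}. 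The first is a new, time-independent source $C|t_0|r^{-1}$ that must be carried through the iteration.

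The main technical obstacle is handling this new source term through the $M$-fold iteration. I would modify the statement of Lemma~\ref{l:step1} and its rescaled analog to read
\begin{equation*}
E(t,r)\le r^3 f(s)^2 + \frac{C|s|}{r} + \frac{C_0}{r}\int_s^t f(\tau)\sum_{j\in \N} 2^{-j}E_j(\tau,r)\,d\tau,
\end{equation*}
for $-1\le s\le t<0$, and similarly for the rescaled version. The key observation is that because $C|t_0|/r$ is independent of the intermediate iteration times $t_1,\dots,t_M$, the nested-integral identity \eqref{e:nestedint} causes it to accumulate only the geometric factor $\sum_{j=0}^{M-1}C_1^j/j!\to e^{C_1}$ as $M\to\infty$. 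Substituting $t_0=-r^{q'}$ then yields $E(t,r)\le Cr^{q'-1}e^{C_1}$, which meets the target order $r^{3-2/(q-1)}$ from Proposition~\ref{p:typeI} precisely when $q\le 2$. For $q>2$ the conclusion is obsolete: the power-law hypothesis $\|u(t)\|_{L^\infty}\le c_0|t|^{-1/q}$ already satisfies the Prodi-Serrin criterion (with $p=\infty$ and $2/q<1$), so $u$ remains regular and the energy law holds by classical means.
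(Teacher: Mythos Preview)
Your proposal is correct and follows essentially the same route as the paper: drop the dissipation term by sign, absorb the Laplacian term into the time-derivative term via the comparison $\alpha\ge 2\iff\tfrac{3}{p}+\tfrac{2}{q}\ge 1$ (the complementary Prodi--Serrin case being classical) for Proposition~\ref{p:A}, and for Proposition~\ref{p:typeI} split the Laplacian contribution via H\"older/Young into an absorbable $E$-term plus the new source $C|t_0|r^{-1}$, which the iteration sums to $C|t_0|r^{-1}e^{C_1}=Cr^{q'-1}$ and matches the target order precisely when $q\le 2$.
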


We are now in a position to prove Theorem \ref{t:typeI}.

\begin{proof}
Recall (c.f. Remark \ref{r:Morrey}) that Proposition \ref{p:typeI} can be reframed as the implication 
\[
\|u(t)\|_{L^\infty(\R^n)} \le \frac{c_0}{|t|^{1/q}} 
\implies 
u\in L^\infty \cM^{2, n - \frac{2}{q-1}}.
\]
In the Type-I case for the $3$-dimensional Navier-Stokes equations, we have $q=2$, $n=3$, so that the above becomes 
\[
\|u(t)\|_{L^\infty(\R^3)} \le \frac{c_0}{\sqrt{t}} 
\implies 
u\in L^\infty \cM^{2,1}.
\]
Before proceeding with the proof, we note that this is the implication ``Type-I in time implies Type-I in space'' alluded to in the Introduction.  By ``Type-I in space,'' we mean we mean a blowup which occurs under control of a scaling invariant norm in space---in this case the Morrey space $\cM^{2,1}$ with integrability $2$ and rate $1$.  So it remains to show that the Type-I in space condition implies energy equality.  We argue as follows:  Since $\cM^{1,2}$ is invariant under shifts $f\mapsto f(\cdot - x_0)$ and the rescaling $f(x)\mapsto \l f(\l x)$, we have by Cannone's Theorem \cite{Cannone} that $u\in L^\infty B^{-1}_{\infty, \infty}$.  Consequently, interpolation with the enstrophy space $L^2 H^1 = L^2 B^{1}_{2,2}$ puts the solution into the Onsager-critical class $L^3 B^{1/3}_{3,3}\subset \OR$, from which we conclude energy equality.   
\end{proof}

\section{Concentration Dimension of the Energy Measure}
\label{s:ConcDim}

As explained in Section~\ref{s:dE}, the results above directly imply a lower bound on $D$.  For example, if $u$ belongs to $L^q L^p$ and $(p,q)$ satisfies \eqref{e:pq}, $p\ge 3$, and $q<\infty$, then we have
\begin{equation}
\label{e:Dlwrold}
D \ge \b = \frac{q}{q-1}\left( n - \frac{2n}{p} - \frac{2+n}{q} \right). %= n - \frac{\frac np + \frac2q}{1-\frac1q}.
\end{equation}
It turns out for such pairs $(p,q)$ such that also $p<\infty$, one can obtain a sharper bound by exploiting the local energy inequality directly for the entire cover of a concentration set. The method has been used in the context of Navier-Stokes system in \cite{LS2016b} to obtain dimension-dependent conditions that guarantee energy equality for Leray-Hopf solutions.  We adapt this method to the $n$-dimensional Euler equations and give a refinement in terms of the energy measure.  We also treat this refinement in the case of the 3-dimensional Navier-Stokes equations.  Because we are after a slightly stronger conclusion than in \cite{LS2016b}, we have to adjust the proof at many steps; therefore we repeat most of the argument here.

In what follows, we make use of the following alternate form of the local energy equality in terms of the energy measure.  For any $\s\in C_0^\infty(\O\times (-1,0])$, we have
\begin{equation} \label{e:lee}
\int \s(0)\dE = \int_{-1}^0 \int_\O |u|^2 \p_t \s + (|u|^2 + 2p)u \cdot \n \s - 2\nu(|\n u|^2 \s - u\otimes \n \s:\n u)\dx \dtau,
\end{equation}
where as usual we understand that $\nu = 0$ for the Euler Equations.
Notice that we have killed off the initial data by requiring support in $\O\times (-1,0]$ rather than $\O\times [-1,0]$. By approximation, \eqref{e:lee} holds for all $\s \in \Lip_0((-1,0] \times \O)$.

\subsection{Euler Equations}

\begin{THEOREM}
	\label{t:oldmethod}
	Suppose $u\in L^q L^p(\O)$ for some $(p,q)$ satisfying \eqref{e:pq}, and suppose $d\ge 0$ satisfies
\begin{equation}\label{e:dpq}
\begin{split}
d  \le n - \frac{\frac2q}{1-\frac{2}{p}-\frac1q},
& \quad q\le p\le \infty, \;q<\infty \\
d  <  n - \frac{\frac2q}{1-\frac{2}{p}-\frac1q},
& \quad 3\le p < q <\infty.\\
d  < n,  & \quad 2<p\le \infty=q.
\end{split}
\end{equation}
Then $\cE(S) = 0$ for every $S\subset \O$ with finite $d$-dimensional Hausdorff measure.  In particular, if $\dim_{\cH}(\Sigma_{ons})$ satisfies \eqref{e:dpq}, then $u$ satisfies the local energy equality on $[-1,0]$.  Regardless of the size of $\Sigma_{ons}$, the right side of \eqref{e:dpq} gives a lower bound on the concentration dimension $D$.  Similarly, if $\dim_\cH(\Sigma)$ satisfies \eqref{e:dpq}, then $D=n$.
\end{THEOREM}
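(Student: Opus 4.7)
The plan is to deploy the covering argument of \cite{LS2016b}, adapted to the $n$-dimensional Euler setting. Given a compact set $S \ss \O$ with $\cH^d(S) < \infty$, I would cover $S$ by finitely many balls $B_i = B_{r_i}(x_i)$ with $r_i < \d$ and $\sum_i r_i^d \le \cH^d(S) + 1$, then construct a Lipschitz test function $\s \in \Lip_0((-1,0] \times \O)$ supported in the union of parabolic cylinders $Q_i = B_{2 r_i}(x_i) \times (-r_i^\a, 0]$ with $\s(0, \cdot) \equiv 1$ on a neighborhood of $S$, where $\a = \a(p,q)$ is a parabolic scaling exponent matched to the $L^q L^p$ hypothesis. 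Substituting $\s$ into the energy-measure form \eqref{e:lee} of the local energy equality bounds
\[
\cE(S) \le \int_{-1}^0\int_\O |u|^2 |\p_t \s| + |u|^3 |\n \s| + 2|p||u||\n\s|\dx\dtau,
\]
so the task reduces to showing the right-hand side vanishes as $\d \to 0$.

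The decisive improvement over the crude bound $D \ge \b$ (which follows already from \prop{p:A} via the covering argument in Section~\ref{s:ud}) is to estimate each of these integrals by H\"older's inequality with exponents $(q, p)$ applied over the combined support $\bigcup_i Q_i$, rather than cylinder-by-cylinder. The remaining dual-norm factor of $\n\s$ (or $\p_t\s$) is controlled using the structure of the cover by sums of the form $\sum_i r_i^m$, and the scaling $\a$ is chosen precisely so that the exponent $m$ matches the critical dimension $d^* = n - (2/q)/(1-2/p-1/q)$. Crucially, the factor $\|u\|_{L^q L^p(\bigcup Q_i)}$ vanishes as $\d \to 0$ by absolute continuity of the $L^q L^p$ norm, since $|\bigcup_i Q_i| \lesssim \d^{n-d+\a}\cH^d(S) \to 0$; this is where the assumption $q < \infty$ enters. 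The dichotomy in \eqref{e:dpq} between the range $q \le p$ (where $d = d^*$ is allowed) and $p < q$ (where one requires $d < d^*$) reflects whether the relevant H\"older exponent for the small factor is strictly positive at the critical dimension or only for subcritical $d$. The remaining case $q = \infty$ is already resolved by \cor{c:Lp}.

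The main obstacle is the pressure term, due to its nonlocal dependence on $u$. I would invoke \lem{l:pressure} on each $B_{r_i}(x_i)$ with outer radius $\rho$ comparable to $\dist{x_i}{\p \O}$, splitting $p - (p)_{r_i}$ into (i) a near-field piece $\lesssim \|u\|_{L^3(B_{2r_i})}^2$, which folds into an enhanced flux-type estimate handled exactly as above; (ii) a dyadic annular tail $r_i^{2n/3+1}\sum_j (2^j r_i)^{-n-1}\|u\|_{L^2(B_{2^{j+1}r_i})}^2$, whose contribution after interchanging the $i$- and $j$-sums collapses to an expression of the same order as the flux term, provided the cover has bounded multiplicity (achievable by a standard Vitali refinement); and (iii) a boundary correction absorbed using the global $L^3L^3$ control on $u$ and $p$ supplied by Claim~\ref{c:L3}. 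After this reorganization the pressure contribution obeys the same bound $\lesssim \cH^d(S)\,\omega(\d)$ as the other terms, so that $\cE(S) = 0$ in the limit $\d \to 0$.

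Once $\cE(S) = 0$ is established for every compact $S \ss \O$ of finite $d$-dimensional Hausdorff measure, the three remaining conclusions follow quickly. The defect measure $\th \ge 0$ is supported on $\Sigma_{ons}$ by the discussion in Section~\ref{s:dE}, so the hypothesis $\dim_\cH(\Sigma_{ons}) \le d$ (within the admissible regime) forces $\cE(\Sigma_{ons}) = 0$, hence $\th(\Sigma_{ons}) = 0$ by positivity, hence $\th \equiv 0$; this is the local energy equality on $[-1, 0]$. The lower bound on $D$ is immediate from \cor{c:newD}. Finally, if $\dim_\cH(\Sigma)$ itself satisfies \eqref{e:dpq}, then no admissible set remains in the infimum defining $D$, so $D = n$ by convention.
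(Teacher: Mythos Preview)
Your overall strategy---cover $S$ by small balls, build a Lipschitz test function supported on the associated parabolic cylinders, plug into \eqref{e:lee}, and estimate each term by H\"older in $(p,q)$ over the whole cover---is exactly the paper's approach, and your handling of the $|u|^2\p_t\s$ and $|u|^3|\n\s|$ terms, the vanishing as $\d\to 0$ via $q<\infty$, and the deduction of the three corollaries are all correct.

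The one place where your plan diverges substantively is the pressure, and here there is a genuine gap. You propose to invoke \lem{l:pressure} on each ball and then control the dyadic tail $r_i^{2n/3+1}\sum_j (2^j r_i)^{-n-1}\|u\|_{L^2(B_{2^{j+1}r_i})}^2$ by ``interchanging the $i$- and $j$-sums'' together with bounded multiplicity of the cover. But the balls $B_{2^{j+1}r_i}(x_i)$ appearing in the tail are, for large $j$, vastly larger than the cover balls and have no bounded-overlap structure with respect to the cover; a Vitali refinement of the $B_{r_i}$ says nothing about overlaps at scale $2^j r_i$. So the collapse you describe does not go through as stated. The paper avoids this entirely by a much simpler device: it first reduces to the range $3\le p,q<\infty$ (interpolating $L^qL^p$ with $L^\infty L^2$ along the segment to land in some $L^rL^r$, and handling $q=\infty$ separately via \cor{c:Lp}), and then uses the global Calder\'on--Zygmund bound $\|p\|_{L^{p/2}}\le C\|u\|_{L^p}^2$. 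This makes the pressure term $P_N$ estimable exactly like the flux term $D_N$, with the same H\"older exponents and the same application of \lem{l:conv}, and no local pressure decomposition is needed at all. You should replace your pressure step with this argument; note that it also explains why the reduction to $p<\infty$ and $q\ge 3$ is necessary (Riesz transforms are unbounded on $L^\infty$, and the time-H\"older exponent $\tfrac{q}{q-3}$ is meaningless for $q<3$), a reduction your sketch omits.
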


\begin{figure}[h]
		\includegraphics{./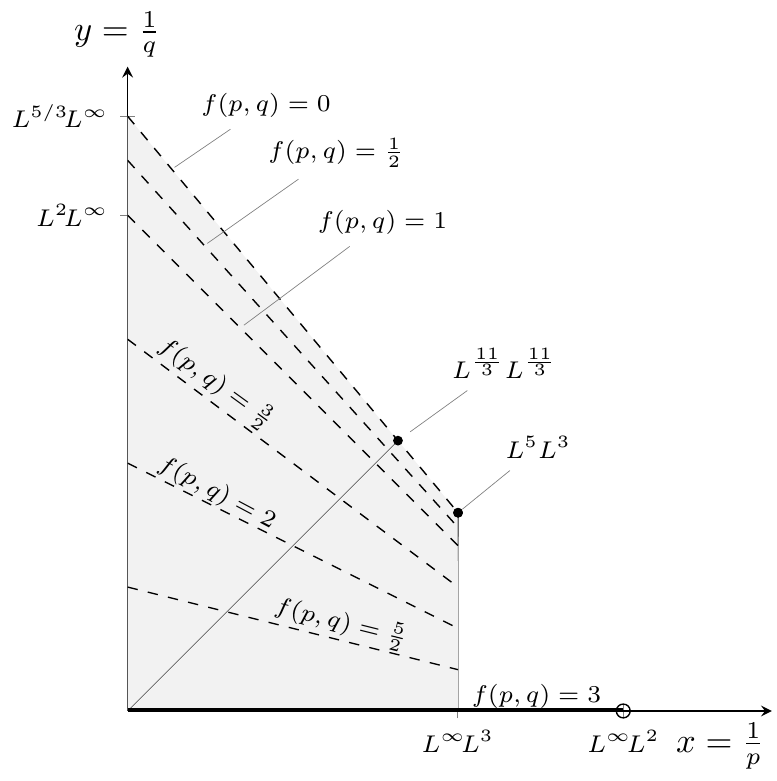}
		\caption{ Define a function $f(p,q)$ by the right side of \eqref{e:dpq}.  The dashed lines depict level curves of $f(p,q)$ in the case $n=3$ (specified for the sake of definiteness).  \label{fig:EE} }
\end{figure}

\begin{REMARK}
Notice that
\[
n - \frac{\frac2q}{1-\frac{2}{p}-\frac1q}
= \frac{ n - \frac{2n}{p} - \frac{2 + n}{q}}{1 - \frac2p - \frac1q}
\ge \frac{ n - \frac{2n}{p} - \frac{2 + n}{q}}{1 - \frac1q} = \b,
\]
with equality precisely when $p=\infty$.  So the lower bound on $D$ given by the present theorem is indeed better than \eqref{e:Dlwrold} except when $p=\infty$, when it is the same.
\end{REMARK}

\begin{proof}
	The statement regarding the concentration set $\Sigma$ is a direct consequence of Corollary \ref{c:newD}. Next, recall from Section \ref{s:dE} that the two conditions $|\Sigma_{ons}|=0$ and $\cE(\Sigma_{ons})=0$ together imply energy equality.  Of course, if $\dim_\cH(\Sigma_{ons})<n$ (which occurs whenever $\dim_\cH(\Sigma_{ons})$ satisfies \eqref{e:dpq}), then $|\Sigma_{ons}|=0$ trivially and we need only prove $\cE(\Sigma_{ons})=0$ in order to conclude energy equality.  Therefore the conclusion holds trivially for $u\in L^\infty L^p(\O)$, $p>2$, since we have already proven $D=n$ in this case (see Corollary \ref{c:Lp}).
	
	It remains to show that $\cE(S) = 0$ whenever $\dim_\cH(S)$ satisfies \eqref{e:dpq}.  Let us first reduce to the case $p,q\in [3,\infty)$.  This can fail for three reasons: $q<3$, $p=\infty$, or $q=\infty$.  We have already dealt with the last case; the other two are covered by the following interpolation argument. Suppose $q<p\le \infty$ and put $r = 2 + q - \frac{2q}{p}$.  Then $(r,r)$ satisfies $\eqref{e:pq}$, $r\in [3,\infty)$, and $u\in L^r L^r$, as it lies along the line segment joining $L^q L^p$ with $L^\infty L^2$. (That is, $(\frac1r, \frac1r)$ lies between $(\frac1p, \frac1q)$ and $(\frac12, 0)$ on the line $2px + q(p-2)y = p$.) Furthermore, it is easy to check that
	\[
	n - \frac{2}{r - 1 - \frac{2r}{r}} = n - \frac{2}{q - 1 - \frac{2q}{p}},
	\]
	and therefore that
	\[
	d \le n - \frac{2}{r - 1 - \frac{2r}{r}},
	\]
	so that $\cE(S)=0$, as desired.  For the remainder of the proof, we assume that $p,q\in [3,\infty)$.
	
	Choose $\d\in (0,\e/3)$, then choose $x_i \in \O_{\e}$, $r_i\in (0,\d)$ for all $i$, such that $S\subset \bigcup_i B_{r_i}(x_i)$ and $\sum_{i=1}^\infty r^d_i \lesssim \cH_d(S)+1$.  Denote $I_i = (-2r_i^{\a},0)$ (where $\a$ is determined below).
	%; let $Q_i$ denote the cylinder $Q_i = B_{r_i}(x_i)\times (-r_i^\a,0)$, and put $Q = \bigcup_i Q_i$, $I = \bigcup_i I_i$.  
	Let $\psi(s)$ be the usual (symmetric, radially decreasing) cut-off function on the line with $\psi(s) = 1$  on $|s|<1.1$ and $\psi(s)$ vanishing on $|s|>1.9$. Let $\phi_i(x,t) = \psi(|x-x_i|/r_i) \psi(t/r_i^{\a})$. Define
	\[
	\phi^N = \sup_{1\le i\le N} \phi_i,
	\quad \quad
	\phi = \sup_{i\in \N} \phi_i.
	\]
	Then each $\phi^N$ is continuous with support in $\O_\e\times (-1,0]$, $0\le \phi^N \le 1$, and $\phi^N$ increases pointwise to $\phi$, which is identically $1$ on $S\times \{0\}$. So
	\[
	\cE(S) \le \int \phi(0) \dE = \lim_{N\to \infty} \int \phi^N(0)\dE,
	\]
	by monotone convergence.  Furthermore, each $\phi^N$ is differentiable a.e., with
	\begin{equation}
	\label{eq:Evansthm}
	|\p \phi^N(x,t) | \leq \sup_{1\le i\le N} |\p \phi_i(x,t)|, \, \text{ a.e., see \cite[Theorem 4.13]{Evans}}.
	\end{equation}
	(In fact, we even have $|\p \phi(x,t) | \leq \sup_{i\in \N} |\p \phi_i(x,t)|$, though we don't use it.) Therefore, an approximation argument shows that we can put $\phi^N$ in the local energy equality:
	\[
	\lim_{t\to 0} \int_\O |u(t)|^2 \phi^N(t)\dx = \int_{-1}^0 \int_{\O} |u|^2 \p_t \phi^N + (|u(t)|^2 + 2p)u\cdot \n \phi^N \dx \dtau.
	\]
	
	Putting all this together, we obtain
	\begin{equation}
	\label{e:ESphiN}
	\cE(S) \le \lim_{N\to \infty} \int_{-1}^0 \int_{\O} |u|^2 \p_t \phi^N + (|u|^2 + 2p)u\cdot \n \phi^N \dx \dtau.
	\end{equation}
	For $d$ small enough, we will obtain uniform bounds on
	\[
	C_N = \int_{-1}^0 \int_\O |u|^2 \p_t \phi^N\dx \dtau,
	\quad
	D_N + P_N = \int_{-1}^0 \int_\O (|u|^2 +2p)u\cdot \n \phi^N \dx \dtau.
	\]
	Using H\"older's inequality and \eqref{eq:Evansthm}, we have
	\begin{align*}
	C_N & \leq C\|u\|_{L^q(I,L^p(\O))}^2 \left( \int_{-1}^0 \left( \sum_{i=1}^N r_i^{- \frac{\a p}{p-2} + n}\chi_{I_i}(t) \right)^{\frac{p-2}{p}\frac{q}{q-2}} \dt \right)^{\frac{q-2}{q}} \\
	D_N + P_N & \leq C\|u\|_{L^q(I,L^p(\O))}^3  \left(  \int_{-1}^0 \left( \sum_{i=1}^N r_i^{-\frac{p}{p-3} +n} \chi_{I_i}(t) \right)^{\frac{p-3}{p} \frac{q}{q-3}} \dt \right)^{\frac{q-3}{q}}
	\end{align*}
	Note that to bound $P_N$, we have also used boundedness of the Riesz transforms on $L^{p/2}$ (recall that $p\in [3,\infty)$).  That is, we use the bound $\|p\|_{L^{p/2}} \le C\|u\|_{L^p}^2$ before exhausting the remaining integrability on $\n \phi^N$.  The following lemma allows us to bound the quantities $C_N$, $D_N+P_N$ for small enough $d$:
	
	\begin{LEMMA}[\cite{LS2016b}]\label{l:conv} Let $d$, $\d$, $r_i$, $I_i$ be as above, and let $\s,s$ be positive numbers.  Suppose the sum $H = \sum_i r_i^d$ is finite. Then the inequality
		\emph{\begin{equation}
		\label{lemmaineq}
		\int \left( \sum_i r_i^{-\s}\chi_{I_i}(t) \right)^s\dt  \lesssim H^s
		\end{equation}}
		holds whenever $s\ge 1$ and $d\le \frac{\a}{s}-\s$, or $s<1$ and $d < \frac{\a}{s}-\s$. When $d = 0$, the above holds (trivially) under the non-strict assumption $0 \leq \frac{\a}{s}-\s$.
	\end{LEMMA}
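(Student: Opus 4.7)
The plan is to split on whether $s \ge 1$ or $s < 1$. In the first case, Minkowski's inequality in $L^s$ applies directly to the sum $f := \sum_i r_i^{-\sigma}\chi_{I_i}$:
\[
\left\|\sum_i r_i^{-\sigma}\chi_{I_i}\right\|_{L^s} \le \sum_i r_i^{-\sigma}|I_i|^{1/s} = 2^{1/s}\sum_i r_i^{\alpha/s - \sigma}.
\]
Since $\alpha/s - \sigma \ge d$ and $r_i \le \delta < 1$, each term satisfies $r_i^{\alpha/s-\sigma} \le r_i^d$; the sum is therefore at most $H$, and raising to the $s$th power gives the claim.

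The case $s < 1$ is the main obstacle, since Minkowski is unavailable and the naive subadditivity $(\sum a_i)^s \le \sum a_i^s$ applied directly to $f$ is too lossy---it effectively treats the nested intervals $I_i$ as disjoint. The plan is to first coalesce balls into dyadic scales and then exploit the nested structure of the resulting step function. Group the indices by $R_k = \{i : r_i \in [2^{-k-1}, 2^{-k})\}$ with $N_k = |R_k|$; for $i \in R_k$ one has $r_i^{-\sigma} \le 2^{(k+1)\sigma}$ and $I_i \subset J_k := (-2^{1-k\alpha},0)$, whence the pointwise bound
\[
f(t) \le g(t) := 2^\sigma \sum_k N_k\, 2^{k\sigma}\,\chi_{J_k}(t).
\]
The intervals $J_k$ are themselves nested, so on the layer $J_K \setminus J_{K+1}$ the value of $g$ is a partial sum $\sum_{k \le K} A_k$ with $A_k := 2^\sigma N_k 2^{k\sigma}$. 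Applying subadditivity once at this coarser (now genuinely disjoint) level, followed by Fubini, yields
\[
\int g^s \dt \lesssim \sum_k A_k^s 2^{-k\alpha} \simeq \sum_k N_k^s\, 2^{-k(\alpha - \sigma s)}.
\]

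To close, I would apply H\"older's inequality in $k$ with exponents $(1/s, 1/(1-s))$, writing $N_k^s 2^{-k(\alpha-\sigma s)} = (N_k 2^{-kd})^s \cdot 2^{-k(\alpha - \sigma s - sd)}$ to obtain
\[
\sum_k N_k^s\, 2^{-k(\alpha-\sigma s)} \le \left(\sum_k N_k 2^{-kd}\right)^s \left(\sum_k 2^{-k\beta}\right)^{1-s}, \qquad \beta = \frac{\alpha - \sigma s - sd}{1-s}.
\]
The first factor is comparable to $H^s$ via the dyadic/continuous comparison $\sum_k N_k 2^{-kd} \le 2^d \sum_i r_i^d = 2^d H$. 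The second factor is a geometric series, finite precisely because the strict hypothesis $d < \alpha/s - \sigma$ forces $\beta > 0$. The $d = 0$ case then follows trivially: whenever $\alpha/s - \sigma > 0$ one may pick any $d' \in (0, \alpha/s-\sigma)$ and use $r_i < 1$ to get $\sum_i r_i^{d'} \le N = H$, reducing to the strict case already handled.
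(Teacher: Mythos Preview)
Your argument is correct. The paper does not prove this lemma at all; it is simply quoted from \cite{LS2016b}, so there is nothing to compare against here beyond checking soundness.

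Both cases are fine. For $s\ge 1$, Minkowski together with $r_i<1$ and $\alpha/s-\sigma\ge d$ gives exactly $\|f\|_{L^s}\le 2^{1/s}\sum_i r_i^{\alpha/s-\sigma}\le 2^{1/s}H$. For $s<1$, the dyadic regrouping is the right move: after replacing $f$ by the coarser step function $g=\sum_k A_k\chi_{J_k}$ with nested $J_k$, the layer decomposition gives
\[
\int g^s=\sum_K |J_K\setminus J_{K+1}|\Big(\sum_{k\le K}A_k\Big)^s\le \sum_K |J_K\setminus J_{K+1}|\sum_{k\le K}A_k^s=\sum_k A_k^s|J_k|\lesssim\sum_k N_k^s\,2^{-k(\alpha-\sigma s)},
\]
where the Fubini swap uses $\sum_{K\ge k}|J_K\setminus J_{K+1}|=|J_k|$. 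Your H\"older step with exponents $(1/s,1/(1-s))$ and the splitting $N_k^s 2^{-k(\alpha-\sigma s)}=(N_k 2^{-kd})^s\cdot 2^{-k(\alpha-\sigma s-sd)}$ is clean, and the strict hypothesis $d<\alpha/s-\sigma$ is exactly what makes the geometric factor converge.

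One small remark on the final sentence: your reduction of the $d=0$ clause to the strict case handles $\alpha/s-\sigma>0$ but not the borderline $\alpha/s-\sigma=0$ when $s<1$. In fact that borderline case, read literally with a constant independent of the number of balls, does not hold (take $r_i=2^{-i}$, $i=1,\dots,N$, and compute $\int f^s\sim N$ while $H^s=N^s$). The clause should be understood either as restricted to $s\ge1$, or with the constant allowed to depend on $H=N$, which is a fixed finite number when the covered set has $\mathcal H^0(S)<\infty$. Either reading is harmless for the application in the paper, and your proof covers everything that is actually used.
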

	
	Following \cite{LS2016b}, we translate the hypotheses and conclusion of the lemma into statements involving $p$, $q$, $\a$, and $d$; then we optimize in $\a$. When dealing with $C_N$, we set $\s = \frac{\a p}{p-2} - n$ and $s = \frac{p-2}{p}\frac{q}{q-2}$.  Denoting $H_N = \sum_{i=1}^N r_i^d$, we conclude that
	\begin{equation}
	\label{e:CN}
	C_N \le \|u\|_{L^q(I,L^p(\O))}^2 H_N^{1 - \frac2p}
	\quad \text{ whenever } \quad
	\left\{
	\arraycolsep=1.4pt\def\arraystretch{2}
	\begin{array}{lcl}
	d\le n - \frac{\frac2q \a}{1 - \frac2p}, & & 2\le q\le p\le \infty,\\
	d < n - \frac{\frac2q \a}{1 - \frac2p}, & & 2\le p< q\le \infty.
	\end{array}
	\right.
	\end{equation}
	Dealing with $D_N + P_N$ is essentially the same: we set $\s = \frac{p}{p-3} - n$ and $s = \frac{p-3}{p}\frac{q}{q-3}$ and apply the lemma to conclude that
	\begin{equation}
	\label{e:DNPN}
	D_N + P_N \le \|u\|_{L^q(I,L^p(\O))}^3 H_N^{1 - \frac3p}
	\quad \text{ whenever } \quad
	\left\{
	\arraycolsep=1.4pt\def\arraystretch{2}
	\begin{array}{lcl}
	d\le n - \frac{1 - \a(1 - \frac3q)}{1 - \frac3p}, & & 3\le q\le p<\infty,\\
	d < n - \frac{1 - \a(1 - \frac3q)}{1 - \frac3p}, & & 3\le p< q\le \infty.
	\end{array}
	\right.
	\end{equation}
	In light of the bound $H_N \le \cH_d(S)+1$, which is uniform in both $N$ and $\d$, we have
	\[
	\cE(S) \le \|u\|_{L^q(I,L^p(\O))}^2 (\cH_d(S)+1)^{1 - \frac2p} + \|u\|_{L^q(I,L^p(\O))}^3 (\cH_d(S)+1)^{1 - \frac3p},
	\]
	by \eqref{e:ESphiN}, whenever the conditions on $d$ from \eqref{e:CN}, \eqref{e:DNPN} are satisfied for some $\a$.  (Note that, while the estimates on $C_N$ and $D_N + P_N$ are valid for the ranges of $p$ and $q$ stated above, we continue to work under the assumption that $p$ and $q$ both lie in the range $p,q\in [3,\infty)$.)  Since $|I|\to 0$ as $\d\to 0$, we have  $\|u\|_{L^q(I, L^p(\O))}\to 0$ as well (as $q<\infty$), and therefore $\cE(S) = 0$.  The choice of $\a$ which maximizes
	\[
	\min\left\{n - \frac{1 - \a(1 - \frac3q)}{1 - \frac3p},\; n - \frac{\frac2q \a}{1 - \frac2p}
	\right\}
	\]
	(and therefore gives the optimal range for $d$) is given by
	\begin{equation}
	\label{e:alphaEE}
	\a = \frac{ 1 - \frac2p }{1 - \frac2p - \frac1q}.
	\end{equation}
	Substituting this value of $\a$ into the conditions on $d$ derived above, we conclude that $\cE(S) = 0$ whenever
	\begin{equation}
	\begin{split}
	d  \le n - \frac{\frac2q}{1-\frac{2}{p}-\frac1q},
	& \quad 3\le q\le p<\infty \\
	d  <  n - \frac{\frac2q}{1-\frac{2}{p}-\frac1q},
	& \quad 3\le p < q<\infty.
	\end{split}
	\end{equation}
	This completes the proof.
\end{proof}

\subsection{Navier-Stokes Equations}
\label{s:NSEConcdim}

In the case of the NSE, the optimal condition on $d$ analogous to \eqref{e:dpq} breaks into many different parts, depending on $p$ and $q$.  To streamline the statement of the theorem, let us introduce notation for the various regions involved.
\begin{equation}
\begin{split}
\text{I} & := \bigg\{(p,q): p\ge q,\;\; \frac1p + \frac1q > \frac12,\;\; \frac6p + \frac5q \le 3\bigg\}, \\
\text{II} & :=\bigg\{(p,q): 3\le p < q,\;\;\frac1p + \frac1q \ge \frac12,\;\; \frac6p + \frac5q \le 3\bigg\} \\
\text{III} & := \bigg\{(p,q): 2< p < 3,\;\; \bigg(\frac12 - \frac1p\bigg)\bigg(2-\frac3p\bigg) \le \frac1q \le \bigg(\frac12 - \frac1p\bigg)\bigg(2-\frac3p\bigg)\bigg(\frac76 - \frac1p\bigg)^{-1}\bigg\}\\
\text{IV} & :=\bigg\{(p,q):\frac1p + \frac1q \le \frac12,\;\;\frac3p + \frac1q \le 1\bigg\}, \\
\text{V} & :=\bigg\{(p,q):\frac1p + \frac1q < \frac12,\;\;\frac1q<\bigg(\frac12 - \frac1p\bigg)\bigg(2-\frac3p\bigg)\bigg\}\big\backslash \text{IV} \\
\end{split}
\end{equation}

Let us introduce also a piecewise function defined on these regions, which will serve as a sort of threshold dimension in what follows:
\begin{equation}
f(p,q):=\left\{
		\arraycolsep=1.4pt\def\arraystretch{2}
		\begin{array}{lcl}
			3 - \frac{\frac2q}{1-\frac{2}{p}-\frac1q},
			& & (p,q)\in \text{I}\cup \text{II}  \\
			\vspace{2 mm}
			3 - \frac{\frac2q (\frac6p-1)}{(2 - \frac3p - \frac3q)(1-\frac{2}{p})},
			& & (p,q)\in \text{III}, \\
			3, & & (p,q)\in \text{IV}\cup \text{V}  \\
		\end{array}
		\right.
\end{equation}
\begin{comment}
\begin{figure}[h]
	\begin{minipage}{0.45\linewidth}
		\includegraphics{./C&D_regions_fig_v2.pdf}
		\vspace{-8 mm}		
		\caption{The regions of $(p, q)$-space involved in the statement of Theorem~\ref{t:oldNSE}. The boundary segments are dotted according to which of two adjoining regions contains the segment in question.   
\label{fig:regions} }
	\end{minipage}
	\begin{minipage}{0.45\linewidth}
		\includegraphics[width=\textwidth]{./C&D_3dplot_v3.png}
		\vspace{-8 mm}
		\caption{A 3D plot of the graph of $f(p,q)$ 		\label{fig:3d} }
	\end{minipage}
\end{figure}
\end{comment}

\begin{figure}[h]
\includegraphics[width = 0.5\textwidth]{./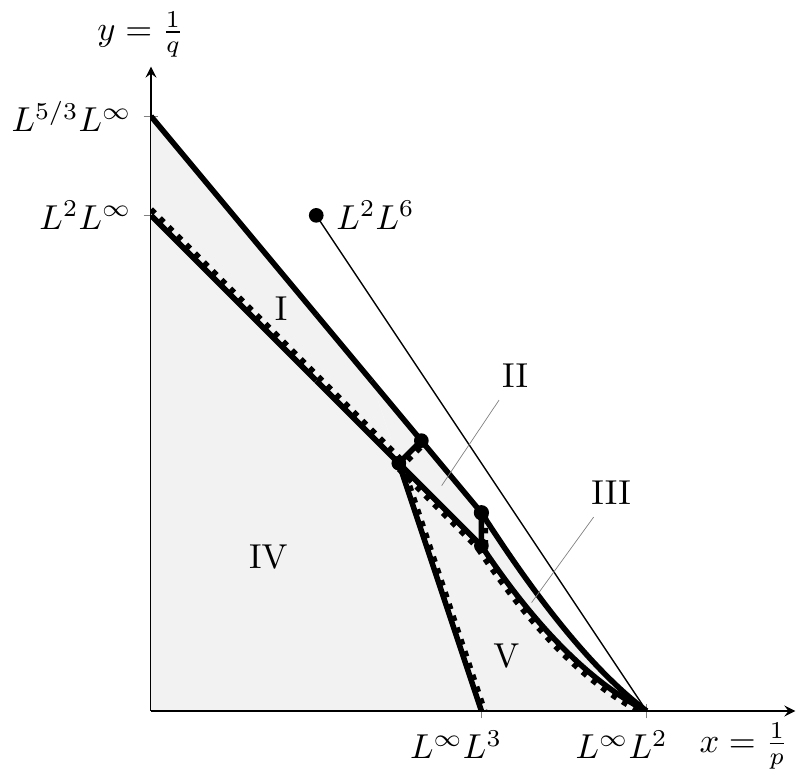}
\caption{The regions of $(p, q)$-space involved in the statement of Theorem~\ref{t:oldNSE}. To highlight values of $f(p,q)$ along  jump discontinuities the boundary segments are dotted according to which of two adjoining regions contains the segment in question.  For example, Region IV contains the segment joining $L^2 L^\infty$ with $L^4 L^4$ along which $f = 3$, but Region II contains the segment joining $L^4 L^4$ with $L^6 L^3$.  
\label{fig:regions} }
\end{figure}

\begin{figure}[h]
\includegraphics[width=0.5\textwidth]{./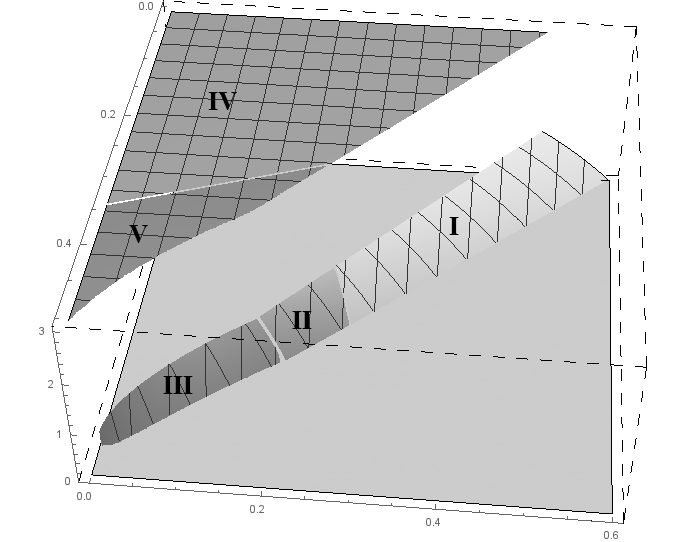}
%\vspace{-8 mm}
\caption{A 3D plot of the graph of $f(p,q)$ 		
\label{fig:3d} }
\end{figure}

\begin{THEOREM}
	\label{t:oldNSE}
	Suppose $u\in L^q L^p(\O)$ for some $(p,q)$ satisfying \eqref{e:pq}, and suppose $d\ge 0$ satisfies
	\begin{equation}
	\label{e:dpqNSE}
	\begin{split}
	d \le f(p,q), & \quad (p,q)\in \emph{I} \\
	d < f(p,q), & \quad (p,q)\in \emph{II} \cup \emph{III} \cup \emph{IV} \cup \emph{V}.
	\end{split}
	\end{equation}
Then $\cE(S) = 0$ for every $S\subset \O$ with finite $d$-dimensional Hausdorff measure.  In particular, if $\dim_{\cH}(\Sigma_{ons})$ satisfies \eqref{e:dpqNSE}, then $u$ satisfies the local energy equality.  Regardless of the size of $\Sigma_{ons}$, the right side of \eqref{e:dpqNSE} gives a lower bound on the concentration dimension $D$.  Similarly, if $\dim_\cH(\Sigma)$ satisfies \eqref{e:dpqNSE}, then $D=3$.
\end{THEOREM}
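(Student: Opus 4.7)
The plan is to closely parallel the proof of Theorem \ref{t:oldmethod}, with two new ingredients adapted to the Navier--Stokes structure and the broader range of $(p,q)$ now allowed.  First I would reduce, exactly as in the Euler case, to proving that $\cE(S) = 0$ whenever $S \subset \O$ has finite $d$-dimensional Hausdorff measure; the conclusions about $\Sigma_{ons}$, $D$, and $\Sigma$ then follow via Corollary \ref{c:newD}, together with the CKN bound $\dim_\cH \Sigma \le 1$ for NSE recorded in Section \ref{s:suitable}.  Next I would set up the same Vitali-type cover of $S$ by balls $B_{r_i}(x_i)$ with $\sum r_i^d \lesssim \cH_d(S)+1$, build the cutoffs $\phi_i(x,t) = \psi(|x-x_i|/r_i)\psi(t/r_i^\a)$ and their supremum $\phi^N = \sup_{1\le i\le N} \phi_i$, and insert $\phi^N$ into the full NSE local energy equality \eqref{e:lee}.

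The dissipation $-2\nu \int |\n u|^2 \phi^N \dx\dtau$ is nonpositive and is simply dropped.  The new viscous boundary term $V_N = 2\nu \int (u \otimes \n\phi^N) : \n u \dx\dtau$ must be bounded; using Cauchy--Schwarz together with \eqref{eq:Evansthm} and the enstrophy control $\n u \in L^2_{t,x}$ I would estimate
\[
|V_N| \le 2\nu \|\n u\|_{L^2 L^2} \Big( \int_{-1}^0\!\int_\O |u|^2 \sup_{1 \le i \le N} |\n \phi_i|^2 \dx\dtau \Big)^{1/2},
\]
and then apply H\"older and Lemma \ref{l:conv} to the inner sup, arriving at a bound of the form $|V_N| \le C\nu \|\n u\|_{L^2 L^2} \|u\|_{L^q L^p} H_N^\theta$ under an additional constraint on $(d, \a, p, q)$.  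For regions I and II ($p \ge 3$) the terms $C_N$ and $D_N + P_N$ are bounded exactly as in the Euler proof; optimizing $\a$ against the combined constraints then yields $f(p,q) = 3 - \frac{2/q}{1 - 2/p - 1/q}$, with the strict/non-strict dichotomy in \eqref{e:dpqNSE} determined by which branch of Lemma \ref{l:conv} is binding, exactly as in Theorem \ref{t:oldmethod}.

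For region III ($2 < p < 3$) the Riesz transform pressure bound $\|p\|_{L^{p/2}} \lesssim \|u\|_{L^p}^2$ fails, so I would first interpolate $u \in L^q L^p$ against the Sobolev companion $u \in L^2 L^6$ of the enstrophy bound to raise the spatial integrability exponent back to $3$, and then apply the I--II argument to the resulting pair; the factor $(6/p - 1)/(2 - 3/p - 3/q)$ appearing in $f(p,q)$ on region III is precisely what falls out of this interpolation combined with the Lemma \ref{l:conv} thresholds.  Regions IV and V lie in the regime where $f(p,q) = 3$; there the conclusion follows from the same machinery combined with $\dim_\cH \Sigma \le 1$, which reduces $\cE(S)$ to $\cE(S \cap \Sigma)$ by Lemma \ref{l:dEdx} and then allows an application of the I--III bounds to the effectively one-dimensional set $S \cap \Sigma$, giving $\cE(S) = 0$ for every $S$ with $\dim_\cH(S) < 3$.

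The hard part will be the optimization across regions.  Unlike the Euler case, where a single choice \eqref{e:alphaEE} of $\a$ suffices throughout, here the binding constraint among $C_N$, $D_N + P_N$, $V_N$, and (for $p < 3$) the interpolation step changes as $(p,q)$ moves; this transition is exactly what partitions the parameter space into the five regions I--V and produces the piecewise definition of $f(p,q)$.  The remainder of the proof will be careful bookkeeping to verify each region separately and to track the boundary cases that govern strict versus non-strict inequalities.
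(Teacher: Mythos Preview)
Your overall architecture is right and tracks the paper closely: same cover, same test function $\phi^N$, same use of Lemma~\ref{l:conv}, drop the dissipation, and control the new cross term $V_N$ (the paper calls it $F_N$) via the enstrophy bound. Your observation that the binding constraint among $C_N$, $D_N+P_N$, $V_N$ shifts with $(p,q)$ and that this is what carves out the five regions is exactly the point.

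There is, however, a genuine gap in your treatment of Region~III. You propose to interpolate $u\in L^qL^p$ against $u\in L^2L^6$ \emph{globally}, land in some $L^aL^3$, and then apply the Region I--II formula to the pair $(3,a)$. That does not recover the stated threshold. If you carry out the interpolation you get $\tfrac1a=\tfrac{3/p+1/q-1}{6/p-1}$ and hence
\[
f(3,a)=3-\frac{6(3/p+1/q-1)}{2-3/p-3/q},
\]
which is strictly smaller than the paper's $f(p,q)=3-\dfrac{(2/q)(6/p-1)}{(2-3/p-3/q)(1-2/p)}$ throughout Region~III. The paper's sharper bound comes from using the $L^2H^1$--$L^qL^p$ interpolation \emph{only} on the cubic and pressure terms $D_N+P_N$ (producing the $d$-independent threshold $\alpha>\tfrac{6/p-1}{2-3/p-3/q}$, which is the factor you recognized), while keeping the $C_N$ estimate in the \emph{original} $(p,q)$; substituting that $\alpha$-threshold into the $C_N$ constraint $d<3-\tfrac{(2/q)\alpha}{1-2/p}$ is what yields the correct Region~III formula. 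Interpolating first throws away information in the $|u|^2\partial_t\phi^N$ term.

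Your sketch for Regions~IV and~V is also too loose. Reducing $\cE(S)$ to $\cE(S\cap\Sigma)$ via Lemma~\ref{l:dEdx} is fine, but ``apply the I--III bounds'' does not make sense as stated, since $(p,q)\notin\text{I}\cup\text{II}\cup\text{III}$ there and Leray--Hopf membership alone does not place $u$ in any of those regions. The paper handles IV directly by the $L^4L^4$ energy equality (available by interpolation on IV), and for V it re-runs the machinery with $\alpha=2$ to produce an auxiliary threshold $g(p,q)$, checks $g(p,q)>1$ on all of V, and then uses $\dim_\cH\Sigma_{ons}\le 1<g(p,q)$ to force $\cE(\Sigma_{ons})=0$, hence full energy equality, hence $\cE(S)=0$ for every $S$ with $\dim_\cH S<3$. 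You need that intermediate step: an explicit $g(p,q)>1$ valid on V.
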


\begin{REMARK}
For $(p,q)\in \text{III}$, the formula defining $f(p,q)$ can be written as a deviation from the formula in the neighboring region II:
\[
3 - \frac{\frac2q (\frac6p-1)}{(2 - \frac3p - \frac3q)(1-\frac{2}{p})} = 3 - \frac{\frac2q}{1 - \frac2p - \frac1q} - \frac{ (\frac3p-1)(3 - \frac6p - \frac4q)}{(1 - \frac2p)(1 - \frac2p - \frac1q)(2 - \frac3p - \frac3q)}.
\]
\begin{comment}
(c.f. Region III) or alternatively,
\[
3 - \frac{\frac2q (\frac6p-1)}{(2 - \frac3p - \frac3q)(1-\frac{2}{p})} = 3 - \frac{\frac4q}{1 - \frac2p} - \frac{\frac13(\frac{2}{p} + \frac1q - \frac56)}{(1 - \frac2p)(2 - \frac3p - \frac3q)}	
\]
(c.f. Region IV).
\end{comment}
\end{REMARK}

\begin{proof}
The claims regarding $\Sigma$ and $\Sigma_{ons}$ follow by the same reasoning as in the proof of Theorem \ref{t:oldmethod}.  Also, the local energy equality for the space $L^4 L^4$ gives the result in region IV.  To treat the remaining regions, let us use the same setup and test function as in the proof of Theorem \ref{t:oldmethod}, assuming $n=3$.  Since $\nu>0$ now, we do have to consider the two additional terms 
\[
E_N = \int_{-1}^0 \int_\O |\n u|^2 \phi^N\dx\dtau,
\quad \quad 
F_N = \int_{-1}^0 \int_\O u\otimes \n \phi^N : \n u \dx\dtau.
\]
Taking $N\to \infty$ and then $\d\to 0$, it is clear that $E_N$ vanishes in the limit regardless of $(p,q)$.  It turns out that $F_N$ is also never limiting with respect to the best possible value of $D$, but it takes a bit of work to see this.  

We treat the remaining four regions in turn as follows:
\begin{itemize}
	\item In Regions I and II, we reuse the bounds \eqref{e:CN} and \eqref{e:DNPN} (with $n=3$), and we show that the analogous bounds for $F_N$ are strictly better in these two regions.  Strictly speaking, this argument only works for $q\ge 3$, but we can use the same logic as in the previous proof to cover the missing region $\text{I}\cap \{q<3\}$.  
	\item In Region III, we reuse \eqref{e:CN} once again, but \eqref{e:DNPN} is no longer valid for any pair $(p,q)$ under consideration.  We give a replacement bound using the enstrophy, which is valid for $2<p<3$, then we optimize as in the previous theorem.  
	\item In Region V, we use a sort of bootstrap argument.  First, we construct a function $g(p,q)$ defined on Region V such that $\cE(S)=0$ whenever $\dim_\cH(S)<g(p,q)$.  Then, we show that $g(p,q)>1$ everywhere on V.  By the discussion in Section \ref{s:suitable}, we know that $\dim_{\cH}(\Sigma_{ons})\le 1<g(p,q)$, and therefore that $\cE(\Sigma_{ons})=0$.  But this implies that $\dE = |u(0)|^2\dx$ and that the local energy equality holds for $t=0$.  This obviously implies $D=3$, which is the desired conclusion for Region V.
\end{itemize}

\textbf{Step 1: Regions I and II.} In accordance with the outline above, we assume without loss of generality that $q\ge 3$. Estimating
	\[
	F_N \leq C\|u\|_{L^q(I,L^p(\O))}^2 \|\n u\|_{L^2 L^2} \left( \int_{-1}^0 \left( \sum_{i=1}^N r_i^{- \frac{ 2p}{p-2} + 3}\chi_{I_i}(t) \right)^{\frac{p-2}{p}\frac{q}{q-2}} \dt \right)^{\frac{q-2}{2q}}
	\]
	and applying Lemma \ref{l:conv}, we conclude that
	\begin{equation}
	\label{e:FN}
	F_N \le \|u\|_{L^q(I,L^p(\O))} \|\n u\|_{L^2 L^2} H_N^{\frac12 - \frac1p}
	\quad \text{ whenever } \quad
	\left\{
	\arraycolsep=1.4pt\def\arraystretch{2.2}
	\begin{array}{lr}
	d\le 3 - \frac{\frac2q \a - (\a - 2)}{1 - \frac2p}, & 2\le q\le p \le \infty,\\
	d < 3 - \frac{\frac2q \a - (\a - 2)}{1 - \frac2p}, & 2\le p < q \le \infty.
	\end{array}
	\right.
	\end{equation}
	Comparison with \eqref{e:CN} shows that our condition on $F_N$ is superfluous if $\a\ge 2$, which is satisfied by \eqref{e:alphaEE} exactly when $\frac1p + \frac1q \ge \frac12$.  But $\frac1p + \frac1q \ge \frac12$ holds for all $(p,q)\in \text{I}\cup \text{II}$.  Therefore we can ignore $F_N$ in Regions I and II and read off the relevant conclusion from the previous theorem in these regions (with $n=3$).  That is, $\cE(S) = 0$ holds if $S$ has finite $d$-dimensional Hausdorff measure, where $d$ satisfies
	\begin{equation}
	\begin{split}
	d \le 3 - \frac{ \frac2q }{1 - \frac2p - \frac1q}, & \quad (p,q)\in \text{I},\\
	d < 3 - \frac{ \frac2q }{1 - \frac2p - \frac1q}, & \quad (p,q)\in \text{II}.
	\end{split}
	\end{equation}
	This takes care of Regions I and II completely.  However, before proceeding we note that our condition cannot be improved by  considering $\a<2$: in this case, the bounds on $d$ from \eqref{e:DNPN} and \eqref{e:FN} can both be improved by increasing $\a$, while the bound on $d$ from \eqref{e:CN} is superfluous.  
	
\textbf{Step 2: Region III.} Region III lies entirely in the range $\{2<p<3\}$, where \eqref{e:DNPN} is not applicable.  Therefore we estimate $D_N + P_N$ differently:
	
	\begin{equation}
	\label{e:DPpl3}
	D_N+P_N\le \|u\|_{L^2 H^1}^{3\b} \|u\|_{L^q L^p}^{3(1-\b)}\left( \int \sup_i r_i^{-\s} \chi_{I_i}(t)\dt\right)^{\frac1\s},
	\end{equation}
	where
	\begin{equation}
	\label{e:defbeta}
	\frac13 = \frac{\b}{6} + \frac{1-\b}{p}\implies
	\b = \frac{\frac6p-2}{\frac6p-1};
	\quad \frac1\s = 1 - \frac{3\b}{2} - \frac{3(1-\b)}{q} = \frac{2 - \frac3p - \frac3q}{\frac6p-1}.
	\end{equation}
	It is shown in \cite{LS2016b} that
	\[
	\int \sup_i r_i^{-\s} \chi_{I_i}(t)\dt \lesssim \sum_j (2^{\a-\s})^{-j},
	\]
	which is bounded whenever $\s < \a$.  Note that this condition is \emph{independent of $d$}, so we formulate it as a bound on $\a$:
	\begin{equation}
	\label{e:DPenstr}
	\a > \frac{ \frac6p - 1 }{2 - \frac3p - \frac3q}.
	\end{equation}
	Reasoning as before, we have control over both $C_N$ and $F_N$ whenever
	\begin{equation}
	\label{e:CNrest}
	d < 3 - \frac{\frac2q \a}{1 - \frac2p}
	\end{equation}
	and $\a\ge 2$.  Now
	\begin{equation}
	\label{e:ag2}
	\frac{ \frac6p - 1 }{2 - \frac3p - \frac3q} > 2 \iff \frac2p + \frac1q > \frac56,
	\end{equation}
	and every pair $(p,q)$ in Region III satisfies $\frac2p + \frac1q > \frac56$.  (This is not difficult to show algebraically, but it is even easier to see geometrically by noting that the line $\frac2p + \frac1q = \frac56$ passes through $L^6 L^3$ and $L^\infty L^{12/5}$.)
	
	Therefore we can substitute \eqref{e:DPenstr} into \eqref{e:CNrest} in order to conclude that $\cE(S) = 0$ whenever $d=\dim_\cH(S)$ satisfies
	\begin{equation}
	\label{e:III}
	d < 3 - \frac{ \frac2q(\frac6p - 1) }{(1 - \frac2p)(2 - \frac3p - \frac3q)}, \quad (p,q)\in \text{III}.
	\end{equation}
	This is the desired conclusion for Region III.
	
\textbf{Step 3: Region V.} The general strategy for dealing with Region V is outlined at the beginning of the proof.  We recall that to complete the proof, it suffices to find a function $g(p,q)>1$ on V such that $\dim_\cH(S)<g(p,q)$ implies that $\cE(S) = 0$.  In order to define this function $g(p,q)$, we first split the region V further into three pieces:
	\begin{equation}
	\text{V}_a:=\text{V}\cap\{p\ge 3\},
	\quad 
	\text{V}_b:=\text{V}\cap\{p<3\}\cap \left\{\frac2p + \frac1q\le \frac56\right\},
	\quad 
	\text{V}_c:=\text{V}\cap \left\{\frac2p + \frac1q > \frac56 \right\}.
	\end{equation}
	In Region $\text{V}_c$, we can reason as in Step 2 and define $g(p,q)$ by the right side of \eqref{e:III}.  Furthermore, 
	\begin{align*}
	g(p,q) = 3 - \frac{ \frac2q(\frac6p - 1) }{(1 - \frac2p)(2 - \frac3p - \frac3q)}
	& = 3 - \frac{2(1 - \frac2p)(2 - \frac3p - \frac3q) - 4((2 - \frac3p)(\frac12 - \frac1p) - \frac1q)}{(1 - \frac2p)(2 - \frac3p - \frac3q)} \\
	& = 1 + \frac{4[(2 - \frac3p)(\frac12 - \frac1p) - \frac1q]}{(1 - \frac2p)(2 - \frac3p - \frac3q)}>1,
	\end{align*}
	since $\frac1q<(2 - \frac3p)(\frac12 - \frac1p)$ for $(p,q)\in \text{V}$.  	
		
	In the Regions $\text{V}_a$ and $\text{V}_b$, we set $\a=2$ and note that the restrictions on $d$ due to \eqref{e:CN} and \eqref{e:FN} coincide in this case.  On the other hand, the restriction due to $D_N + P_N$ becomes superfluous here.  This is easy to see for $(p,q)\in \text{V}_b$, since \eqref{e:DPenstr} is satisfied in this region by \eqref{e:ag2}.  For $(p,q)\in \text{V}_a$, one can also compare \eqref{e:CN} and \eqref{e:DNPN} directly, but the following argument is perhaps more insightful: 	Notice that, as we increase $\a$, the requirements on $d$ become more stringent for \eqref{e:CN} and less stringent for \eqref{e:DNPN}.  The two conditions coincide when $\a$ is given by \eqref{e:alphaEE}.  As discussed in Step 1, this value of $\a$ is less than $2$ if $\frac1p + \frac1q<\frac12$, which is satisfied for all pairs $(p,q)\in \text{V}$.  Therefore, \eqref{e:DNPN} is superfluous when $(p,q)\in \text{V}_a$ and $\a=2$.  We therefore define 
	\[
	g(p,q) = 3 - \frac{ \frac4q }{1 - \frac2p}, 
	\quad (p,q)\in \text{V}_a\cup \text{V}_b.
	\]
	Since $\frac2p + \frac2q <1$ in V, we have 
	\begin{equation}
	\label{e:Vsuitable}
	3 - \frac{ \frac4q }{1 - \frac2p} = \frac{2(1 - \frac2p - \frac2q) + (1 - \frac2p)}{1 - \frac2p} >1,
	\quad (p,q)\in \text{V},
	\end{equation}
	and therefore $g(p,q)>1$ in $\text{V}_a\cup \text{V}_b$, and therefore in all of V.  This completes the proof.	
\end{proof}

\begin{REMARK}
	In proving Theorem \ref{t:oldNSE}, we also proved that our results were optimal for our method, by showing that we had chosen the best possible time scaling $\a$.  In principle, we could have instead solved various inequalities from \cite{LS2016b} for $d$ and simply adjusted the proof to show that the stronger statement still held.  We write out the optimization argument explicitly for two reasons. First, the present argument gives us an explicit best time-scaling.  Secondly, the perspective here is a bit different than in \cite{LS2016b}.  There, the authors fix $d$ and give conditions on $p$ and $q$ so that energy balance holds, and the form of the conditions on $p$ and $q$ is not the same for all values of $d$.  In contrast, we now fix $p$ and $q$ and give conditions on $d$, which take different forms depending on $p$ and $q$.  Translating between these various conditions would not be much quicker than re-optimizing as we have done here, and would be less motivated.
\end{REMARK}

%\bibliographystyle{plain}
%\bibliography{CD}

\def\cprime{$'$}

\end{document}